\theoremstyle{plain}
\newtheorem*{THA}{Theorem A}
\newtheorem*{THB}{Theorem B}
\newtheorem*{THC}{Theorem C}
\newtheorem{Theorem}{Theorem}[section]
\newtheorem{Lemma}{Lemma}[section]
\newtheorem{Remark}{Remark}[section]
\newcommand{\T}{\mathbb T}
\begin{document}

\title{topological complexity, minimality and systems of order two on torus}

\author[Y. Qiao]{Yixiao Qiao}
\address{Y. Qiao: Department of Mathematics, University of Science and Technology of China, Hefei, Anhui, 230026, P.R.China}
\email{yxqiao@mail.ustc.edu.cn}

\subjclass[2010]{Primary  37B05}

\keywords{topological complexity, minimality, 2-step nilsystem}

\thanks{The author is partially supported by NNSF of China (11225105,11371339)}

\begin{abstract} The dynamical system on $\T^2$ which is a group extension over an irrational
rotation on $\T^1$ is investigated.
The criterion when the extension is minimal, a system of order 2 and when
the maximal equicontinuous factor is the irrational rotation is given.
The topological complexity of the extension is computed, and a  negative answer to the latter part of an
open question raised by Host-Kra-Maass \cite{HKM} is obtained.
\end{abstract}
\maketitle
\section{Introduction}
Throughout this paper, by a {\it topological dynamical system} (t.d.s. for short) we mean a pair $(X, T)$, where $X$ is a
compact metric space and $T: X\rightarrow X$ is a homeomorphism.
In this section, we first discuss the motivations and then state the main results of the article.

The study of the complexity of a dynamical system is one of the main topics in the study
of the system. There are several ways to measure the complexity of a t.d.s.. Entropy is a topological invariant
and a t.d.s. with positive entropy means that the complexity of the system is \textquotedblleft big\textquotedblright. We now discuss
the so-called {\it topological complexity}, which was formally introduced in \cite{BHM} and
is suitable to measure systems with 'lower' complexity, especially systems with zero entropy.
Let $(X,T)$ be a t.d.s. and $\mathcal{U}$ be an open cover of $X$. Define the complexity function with respect to
$\mathcal{U}$ as $n\mapsto \mathcal{N}(\vee_{i=0}^{n-1}T^{-i}\mathcal{U})$. We remark that studying the
topological complexity for a subshift has a long history, which is the complexity with respect to the open
cover consisting of cylinders of length 1, see for instance \cite{MQ}.

It was shown in \cite{BHM} that a t.d.s.
is equicontinuous if and only if each nontrivial open cover has a bounded topological complexity.
Since an equicontinuous system is distal (which has zero topological entropy) and each minimal distal t.d.s. is the result of a transfinite sequence of equicontinuous extensions, and their limits, starting from a t.d.s. consisting of a singleton, it is natural to ask what the complexity of a minimal distal system could be.

For a special
class of minimal distal systems, namely systems of order $d$ which are the inverse limit of minimal $d$-step nilsystems
(see Section 2.4 for definitions) it was proved in \cite{DDMSY} that the complexity function
is bounded  above by a polynomial. In \cite{HKM} the authors refined the result of \cite{DDMSY} by giving
the explicit degree of the polynomial and showing that the lower bound and the upper bound have the same degree.
To state the result we note that the complexity defined by the open cover can be rephrased in the language of
$(n, \epsilon)$-spanning sets, namely one may consider the smallest cardinality $r(n,\epsilon)$
of $(n, \epsilon)$-spanning sets
instead of the smallest cardinality of the subcovers. In this language one of the main results in \cite{HKM}
can be stated as follows:

Let $(X=G/\Gamma, T)$ be a minimal $s$-step nilsystem (see section 2.4 for the definition) for some $s\geq 2$  and assume
that $(X, T)$ is not an $(s-1)$-step nilsystem. Let $d_X$ be a distance on $X$ defining its topology.
Then for every $\epsilon>0$ that is sufficiently small, there exist positive constants $c(\epsilon),
c'(\epsilon)$ and $p\geq s-1$ such that the topological complexity $r(n, \epsilon)$ of $(X, T)$ for the distance $d_X$ satisties
$$c(\epsilon)n^p\leq r(n, \epsilon)\leq c'(\epsilon)n^p  \text{ for every }n\geq 1.$$
Moreover,  $c(\epsilon)\rightarrow + \infty$ as $\epsilon\rightarrow 0$.

An open question asked in \cite[Question 1]{HKM} is what systems have the same topological complexity as nilsystems, namely,

\medskip
\noindent{\bf Question:} {\it Characterize the minimal t.d.s. $(X, T)$ satisfying the following property (\ref{complexity}): 
\begin{equation*}
\text { for every } \epsilon>0 { \;small \;enough, \;} \text { there exist constants } c_1(\epsilon), c_2(\epsilon)>0 \text { such that } 
\end{equation*}
\begin{equation}\label{complexity}
c_1(\epsilon)n\leq r(n, \epsilon)\leq c_2(\epsilon)n \text{ for every } n\geq 1 \text{ and }c_1(\epsilon) \rightarrow \infty \text{ as } \epsilon\rightarrow 0.
\end{equation}
If in addition, we assume that $(X, T)$ is a distal system, then is it a 2-step nilsystem?}
\medskip

We will give a negative answer to the latter part of this question in this paper. To do this, we consider
a t.d.s. on $\T^2$ which is a group extension over an irrational
rotation on $\T^1$. The criterion when the extension is minimal, a system of order 2 and when
the maximal equicontinuous factor is the rotation on $\T^1$ is given.
We note that dynamical systems on $\T^2$ have been studied by many authors,
see for example \cite{ Fra1, Fra2, Fur, EG}.  

To state our results explicitly, let $(\T^2, T)$ be a t.d.s., 
where
$\T^2=\T^1\times \T^1$ with the metric 
$$d((x_1, y_1), (x_2, y_2))=\max\{||x_1-x_2||, ||y_1-y_2||\}, \text{ here } ||r-s||=\min\limits_{m\in \mathbb{Z} }|r-s+m|$$
and
\begin{equation}\label{defineT}
T: \T^2\rightarrow \T^2, (x, y)\mapsto (x+\alpha, f(x)+y),
\end{equation}
with $$f\in \mathcal{F}_l :=\{h:\mathbb{R}\rightarrow \mathbb{R}: h \text{ is continuous on } \mathbb{R},\ h(x+1)-h(x)\equiv l \text{ for all } x\in \mathbb{R}\},$$
$l \in\mathbb{Z}$ and $\alpha \in \mathbb{R}\setminus\mathbb{Q}$.

 Now we state the main results of this paper.
In Theorem A, we compute the  topological complexity for a class of systems $(\T^2, T)$ when the function $f$ satisfies some mild conditions.

\medskip
\begin{THA}
 Let $(\T^2,T)$ be a t.d.s. defined in (\ref{defineT}) such that 
  $f\in \mathcal{F}_l$, $l\not=0$, $\alpha \in \mathbb{R}\setminus\mathbb{Q}$ and $f$
 has a bounded variation on $[0,1]$.  Then (\ref{complexity}) holds.
\end{THA}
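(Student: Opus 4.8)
The plan is to reduce everything to the cocycle $S_kf(x):=\sum_{j=0}^{k-1}f(x+j\alpha)$, since $T^k(x,y)=(x+k\alpha,\ S_kf(x)+y)$. Consequently the Bowen distance of $(x,y)$ and $(x',y')$ over $n$ steps is
$$\max_{0\le k<n}d\big(T^k(x,y),T^k(x',y')\big)=\max\Big\{\|x-x'\|,\ \max_{0\le k<n}\|D_k+(y-y')\|\Big\},\qquad D_k:=S_kf(x')-S_kf(x).$$
Writing $f(t)=lt+\phi(t)$ with $\phi$ $1$-periodic (so $\phi=f-l\cdot\mathrm{id}$ inherits bounded variation from $f$) and putting $\delta:=x'-x$, the quadratic-in-$k$ terms cancel and
$$D_k=lk\delta+R_k,\qquad R_k:=S_k\phi(x')-S_k\phi(x).$$
The whole statement hinges on the principal term $lk\delta$ being \emph{linear} in $k$ with nonzero slope $l$, while $R_k$ is a Birkhoff sum over the rotation of the bounded-variation, mean-zero function $\psi_\delta:=\phi(\cdot+\delta)-\phi(\cdot)$. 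Thus the fibre-direction spreading of an orbit segment of length $n$ for points at horizontal distance $|\delta|$ is governed by $\diam_{0\le k<n}\{D_k\}\approx|l|n|\delta|$ (diameter taken in $\|\cdot\|$ on $\T$), and the task is to show that $R_k$ does not corrupt this.

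For the \emph{upper bound} I would use the Lebesgue probability measure $m$ on $\T^2$ together with the elementary inequalities $r(n,\epsilon)\le s(n,\epsilon)$ (a maximal $(n,\epsilon)$-separated set is $(n,\epsilon)$-spanning) and $s(n,\epsilon)\le 1/\min_p m(B_n(p,\epsilon/2))$ (the $(n,\epsilon/2)$-Bowen balls about an $(n,\epsilon)$-separated set are disjoint). It then suffices to bound the measure of a Bowen ball from below. Fibering over the $x$-coordinate,
$$m\big(B_n(p,\epsilon)\big)\ \ge\ \int_{|\delta|<\eta}\Big(2\epsilon-\diam_{0\le k<n}\{D_k\}\Big)_{+}\,d\delta ,$$
since the admissible $y'$ fill an arc of length $(2\epsilon-\diam_k\{D_k\})_+$. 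Taking $\eta\sim\epsilon/(|l|n)$ and bounding $\diam_k\{D_k\}\le|l|n|\delta|+2\max_{k<n}\|R_k\|$, the right-hand side is $\gtrsim\epsilon^2/(|l|n)$ provided $\max_k\|R_k\|$ is small for most $\delta$. This gives $r(n,\epsilon)\le c_2(\epsilon)n$ with $c_2(\epsilon)$ of order $|l|/\epsilon^2$.

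For the \emph{lower bound} I would produce a large $(n,2\epsilon)$-separated set and invoke $r(n,\epsilon)\ge s(n,2\epsilon)$. Take a grid of horizontal spacing $\rho\sim\epsilon/(|l|n)$ and vertical spacing $\sim 2\epsilon$. Points with $\|x-x'\|\ge\epsilon$ are separated by the first coordinate; points with $0<\|x-x'\|<\epsilon$ have $|\delta|\ge\rho$, and then (up to the error $R_k$) the linear term alone makes $\{lk\delta\bmod 1:0\le k<n\}$ spread to diameter $\ge 2\epsilon$ on the circle — either because $|l|n|\delta|$ already exceeds $2\epsilon$, or because the arithmetic progression wraps around $\T$ and so contains a near-antipodal pair. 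No single vertical shift can then bring all $D_k$ into an arc of length $2\epsilon$, so such points are separated regardless of their $y$-coordinates; points sharing an $x$ are separated by the vertical spacing since $D_k\equiv0$ there. Counting gives $s(n,2\epsilon)\gtrsim|l|n/\epsilon^2$, hence $r(n,\epsilon)\ge c_1(\epsilon)n$ with $c_1(\epsilon)$ of order $|l|/\epsilon^2$, and in particular $c_1(\epsilon)\to\infty$ as $\epsilon\to0$.

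The crux — and the only place the bounded-variation hypothesis is used — is the uniform control of $R_k=S_k\psi_\delta(x)$. The Denjoy--Koksma inequality yields $\|S_{q_m}\psi_\delta\|\le\mathrm{Var}(\psi_\delta)\le 2\,\mathrm{Var}(\phi)$ at the continued-fraction denominators $q_m$ of $\alpha$, and the Ostrowski expansion then controls general $k<n$. The delicate point, which I expect to be the \emph{main obstacle}, is that for Liouville $\alpha$ a pointwise bound on $\max_{k<n}\|R_k\|$ may grow with $n$; this is exactly why the upper bound is routed through the average quantity $m(B_n(p,\epsilon))$, which tolerates an exceptional set of $\delta$ of small measure (where the orbit $\{x+j\alpha\}$ clusters) rather than requiring a uniform estimate. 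Making this precise — showing that $m\big(\{\delta:\max_{k<n}\|R_k\|\ge\epsilon\}\big)$ is suitably small, uniformly in $n$, via the bounded variation of $\phi$ and, if needed, a choice of partition endpoints governed by the three-distance theorem — is the technical heart. Once it is in place, the matching linear bounds and the blow-up $c_1(\epsilon)\to\infty$ follow from the elementary counting above, establishing (\ref{complexity}).
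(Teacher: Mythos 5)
Your reduction to the cocycle and the target orders $c_i(\epsilon)\asymp 1/\epsilon^2$ have the right shape, but both halves of the argument rest on exactly the step you yourself flag as the ``technical heart,'' and that step is where the proposal breaks. For the upper bound you route through $s(n,\epsilon)\le 1/\min_p m\bigl(B_n(p,\epsilon/2)\bigr)$, which forces a lower bound $m\bigl(B_n(p,\epsilon/2)\bigr)\gtrsim \epsilon^2/n$ that must be \emph{uniform in the base point} $p$. That is much stronger than ``$\max_{k<n}\|R_k\|$ is small for most $\delta$'': near a point where $f$ climbs steeply (a continuous BV function may increase by $2\epsilon$ on an arbitrarily short interval), and at times $n$ for which the orbit $\{x+j\alpha\}_{j<n}$ clusters (which happens badly for Liouville $\alpha$), the set of admissible $\delta$ can be far smaller than $\epsilon/n$; a volume count keyed to the worst Bowen ball is too lossy. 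The lower bound has the mirror problem: for a uniform grid of spacing $\rho\sim\epsilon/(|l|n)$ you need the linear sweep $lk\delta$ to survive the error $R_k$, which you have not controlled pointwise, and Denjoy--Koksma plus the Ostrowski expansion genuinely fails to give a pointwise bound on $\max_{k<n}\|R_k\|$ when the partial quotients of $\alpha$ are unbounded (the bound degrades by a factor of $a_{m+1}$).

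The paper avoids all of this with an elementary device you are missing: the Jordan decomposition. Write $f=g-h$ with $g,h$ increasing and $g(x+1)-g(x)=M\ge 0$, $h(x+1)-h(x)=M-l\ge 0$. Then for $u_1<u_2$ every summand of $g_k(u_2)-g_k(u_1)=\sum_{j<k}\bigl(g(u_2+j\alpha)-g(u_1+j\alpha)\bigr)$ is nonnegative, so $k\mapsto g_k(u_2)-g_k(u_1)$ is nondecreasing and $\max_{k\le n}|g_k(u_2)-g_k(u_1)|=g_n(u_2)-g_n(u_1)$; likewise for $h$. One then partitions $[0,1]$ \emph{along the level sets of the monotone functions $g_n$ and $h_n$} into $O\bigl(n\bigvee_0^1(f)/\epsilon\bigr)$ cells of wildly unequal lengths, on each of which $g_n$ and $h_n$ vary by at most $\epsilon/4$; any two points in a cell are then automatically $(n,\epsilon)$-close, and a pigeonhole count gives $s(n,\epsilon)\le 20(\bigvee_0^1(f)+1)n/\epsilon^2$ with no continued fractions, discrepancy, or exceptional sets. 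For the lower bound the paper likewise adapts the points to $f_n$ rather than using a uniform grid: since $f_n(1)-f_n(0)=nl$, it picks $x_0<x_1<\cdots<x_k$ with $f_n(x_{i+1})-f_n(x_i)=\epsilon+\eta(\epsilon)$, where $\eta$ is the modulus of continuity of $f$, and runs a first-crossing argument: $f_m(x_j)-f_m(x_i)$ moves by at most $\eta(\epsilon)$ per step, so at the first $m$ where it exceeds $\epsilon$ it lies in $(\epsilon,\epsilon+\eta(\epsilon)]$, which is below $1/3$ and hence survives the passage to $\|\cdot\|$ on the circle. This yields $s(n,\epsilon)\ge n|l|/(3(\epsilon+\eta(\epsilon)))$ and $c_1(\epsilon)\to\infty$ using only continuity of $f$. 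As written, neither of your two counting claims is established; I would redo both bounds along these lines.
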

\medskip
In Theorem B, we give a characterization of equivalence condition for the system $(\T^2, T)$ to be  order 2.
\medskip
\begin{THB}
 Let $(\T^2,T)$ be a t.d.s. defined in (\ref{defineT}) such that  $f\in \mathcal{F}_l$ and $l\not=0$.
Then the following statements are equivalent:
\begin{enumerate}[$(1)$]
\item $(\T^2, T)$ is a system of order 2.

\item There exist $\varphi\in \mathcal{F}_0$ and $c\in \mathbb{R}$ such that $f(x)=lx+\varphi(x+\alpha)-\varphi(x)+c$ for any $x\in \mathbb{R}$.
\end{enumerate}
\end{THB}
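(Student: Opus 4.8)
The plan is to treat the two implications separately, and to reformulate condition $(2)$ cohomologically at the outset. Writing $g(x):=f(x)-lx$, one checks directly that $g\in\mathcal F_0$ (it is continuous and $1$-periodic since $f(x+1)-f(x)\equiv l$), and that $(2)$ is equivalent to the assertion that $g$ is continuously cohomologous to a constant over the rotation $R_\alpha\colon x\mapsto x+\alpha$, i.e. $g(x)=\varphi(x+\alpha)-\varphi(x)+c$ with $\varphi\in\mathcal F_0$. By the Gottschalk--Hedlund theorem this holds if and only if the Birkhoff sums $\sum_{k=0}^{n-1}\big(g(x+k\alpha)-c\big)$ are uniformly bounded, in which case $c=\int_0^1 g$ is forced. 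Thus the whole statement reduces to: $(\T^2,T)$ is of order $2$ iff the degree-zero part $g$ of $f$ is a continuous coboundary plus constant.

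For $(2)\Rightarrow(1)$ I would exhibit an explicit conjugacy. With $\varphi$ as in $(2)$, the map $\Phi(x,y)=(x,y-\varphi(x))$ is a homeomorphism of $\T^2$ (well defined since $\varphi\in\mathcal F_0$ is $1$-periodic), and a one-line computation gives $\Phi\circ T\circ\Phi^{-1}(x,y)=(x+\alpha,\,y+lx+c)=:T_0(x,y)$. So it suffices to show that the affine map $T_0$ is a system of order $2$. This is a standard fact about unipotent affine maps of the torus: $T_0$ has unipotent linear part $\left(\begin{smallmatrix}1&0\\ l&1\end{smallmatrix}\right)$, and $T_0^n(x,y)=\big(x+n\alpha,\;y+lnx+l\alpha\binom n2+nc\big)$ is polynomial of degree $2$ in $n$, so $\mathbf{RP}^{[2]}(\T^2,T_0)=\Delta$ follows from the cube characterization of the regionally proximal relation of order $2$ (the degree-$2$ cocycle has constant second differences, so the order-$2$ cubes degenerate only on the diagonal). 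Since being of order $2$ is invariant under topological conjugacy, $(\T^2,T)$ is of order $2$.

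For $(1)\Rightarrow(2)$ I would use that a minimal system is of order $2$ iff $\mathbf{RP}^{[2]}=\Delta$. Granting (as is the case for these extensions when $l\neq0$) that the maximal equicontinuous factor is $(\T^1,R_\alpha)$, so that $\mathbf{RP}^{[1]}$ is the relation of lying in a common fibre, the inclusion $\mathbf{RP}^{[2]}\subseteq\mathbf{RP}^{[1]}$ forces every order-$2$ proximal pair to have the form $\big((x,y),(x,y+t)\big)$. Because the fibre translations $S_t(x,y)=(x,y+t)$ are automorphisms commuting with $T$, the set $H=\{t:\ ((x,y),(x,y+t))\in\mathbf{RP}^{[2]}\}$ is a closed subgroup of $\T^1$ independent of $(x,y)$, and $(\T^2,T)$ is of order $2$ iff $H=\{0\}$. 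Unwinding the cube characterization of $\mathbf{RP}^{[2]}$ for the skew product, a displacement $t$ lies in $H$ exactly when it is a limit of mixed second differences of the cocycle $f^{(n)}(x)=\sum_{k=0}^{n-1}f(x+k\alpha)$ along base-cubes that collapse onto the diagonal; after subtracting the explicit quadratic contribution of the linear part $lx$, these second differences reduce to Birkhoff sums of $g-\bar g$. Triviality of $H$ then forces those sums to be bounded, whence Gottschalk--Hedlund produces the continuous transfer function $\varphi\in\mathcal F_0$ (the degree-zero assumption guaranteeing a real lift) and the constant $c$ of $(2)$.

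I expect the main obstacle to be this last direction, specifically the passage from $\mathbf{RP}^{[2]}=\Delta$ to boundedness of the Birkhoff sums of $g-\bar g$. One must show that whenever these sums are unbounded they can be assembled, via configurations in which the base cube degenerates, into a non-zero element of $H$, i.e. a genuine order-$2$ regionally proximal pair; the delicate point is that the quadratic term $l\alpha\binom n2$ from the linear part cannot be controlled along a single orbit, so one is forced to work with the two-parameter cube, where its second difference is the honest constant $l\alpha ab$ (killable by choosing $a\alpha$ or $b\alpha$ small) while the difference of Birkhoff sums of $g$ is simultaneously driven away from $0$. Balancing these two effects so that the base cube converges to the diagonal while the fibre displacement stays bounded away from $0$ is the crux; here the bounded-variation hypothesis of Theorem~A plays no role, but the irrationality of $\alpha$ and $l\neq0$ (which pin down the equicontinuous factor and supply the homogeneity behind $H$) are essential.
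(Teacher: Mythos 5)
Your route is genuinely different from the paper's. The paper first proves minimality (Lemma \ref{Q3}) and identifies $(\T^1,\tau)$ as the maximal equicontinuous factor (Lemmas \ref{Q4}--\ref{Q5}, both resting on the one-sided bounded-Birkhoff-sum Lemma \ref{Q20}), and then obtains the equivalence in one line by quoting Donoso's theorem (Theorem \ref{lm4}: order $2$ $\Leftrightarrow$ $E(X)$ is $2$-step nilpotent plus group extension of an equicontinuous system) together with Glasner's theorem (Theorem \ref{lm2}: for exactly these skew products, $E(\T^2)$ is $2$-step nilpotent iff $f(x)=\varphi(x+\alpha)-\varphi(x)+lx+c$). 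Your reduction of condition $(2)$ to ``$g=f-lx$ is a coboundary plus its mean'' via Gottschalk--Hedlund is correct, and your $(2)\Rightarrow(1)$ direction is clean: the conjugacy $\Phi(x,y)=(x,y-\varphi(x))$ does carry $T$ to the affine unipotent map $T_0(x,y)=(x+\alpha,y+lx+c)$, which is minimal (Furstenberg) and is a $2$-step nilsystem; for that last fact I would realize $T_0$ inside the $2$-step nilpotent group generated by $T_0$ and the translations of $\T^2$ (or cite the standard result on affine unipotent toral maps) rather than the sketched second-difference computation of $\mathbf{RP}^{[2]}$, which as written is an assertion, not a proof.

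The direction $(1)\Rightarrow(2)$, however, has a genuine gap, and you name it yourself: the passage from $\mathbf{RP}^{[2]}(\T^2,T)=\Delta$ to boundedness of the Birkhoff sums of $g-\int_0^1 g$ is never carried out. You reduce the problem to showing that the fibre group $H$ is nontrivial whenever those sums are unbounded, observe that this requires balancing the quadratic term $l\alpha ab$ against the divergence of the sums along degenerating base cubes, and then stop at ``balancing these two effects \dots is the crux.'' That crux is precisely the analytic content of Glasner's Theorem \ref{lm2}; without it your argument establishes only the easy implication. Two further ingredients are taken for granted rather than proved: that the projection onto the first coordinate is the maximal equicontinuous factor (so that $\mathbf{RP}^{[1]}$ is the fibre relation) --- this is Lemma \ref{Q5} of the paper and requires the construction of points with one-sided bounded sums in Lemma \ref{Q20}, not just the hypothesis $l\neq0$ --- and the standard but nontrivial facts that $\mathbf{RP}^{[2]}$ is a closed invariant equivalence relation invariant under the fibre translations, so that $H$ is a closed subgroup independent of the base point. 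As it stands the proposal is a correct and instructive reorganization of the statement, with one direction complete and the other reduced to, but not closing, the key estimate.
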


For an irrational number $\alpha$ we may define a number $\nu(\alpha)$ which measures the approximality
of $\alpha$ by rational numbers, see Section 5. We remark that the Lebesgue measure of $\{\alpha\in (0,1): \nu(2\pi\alpha)=0\}$ is 1.
In Theorem C, we give a  minimal distal system  whose topological complexity is low, but it is not a system of order 2.
 Moreover, by the construction of our example, we know that such systems are numerous.
\medskip
\begin{THC}
For a given $l \neq 0$ and an irrational number
$\alpha$ with $v(2\pi\alpha)=0$, there exists a function $f\in \mathcal{F}_l$ such that  
the t.d.s. $(\T^2, T)$ defined in (\ref{defineT}) by $f$
is a minimal distal system but not a system of order $2$, and (\ref{complexity}) holds.
\end{THC}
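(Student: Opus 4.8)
The plan is to combine Theorems A and B to reduce the construction to a single analytic obstruction. By Theorem A, any $f\in\mathcal F_l$ with $l\neq 0$ and bounded variation on $[0,1]$ already yields a system satisfying (\ref{complexity}), so the low-complexity requirement will be free once we exhibit an $f$ of bounded variation. By Theorem B, the system fails to be of order $2$ precisely when $f$ cannot be written as $f(x)=lx+\varphi(x+\alpha)-\varphi(x)+c$ for any $\varphi\in\mathcal F_0$ and $c\in\mathbb R$. Writing $g(x):=f(x)-lx$, note $g\in\mathcal F_0$ (it is $1$-periodic), and the order-$2$ condition becomes the solvability of the cohomological equation $g(x)-c=\varphi(x+\alpha)-\varphi(x)$ for a continuous $1$-periodic $\varphi$. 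Thus the whole problem is: construct a continuous $1$-periodic $g$ of bounded variation whose mean value $c=\int_0^1 g$ is such that $g-c$ is \emph{not} a continuous coboundary over the rotation by $\alpha$, while simultaneously arranging that the resulting $(\T^2,T)$ is minimal and distal.

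First I would work on the Fourier side. For $g(x)=\sum_{k\neq 0}\hat g(k)e^{2\pi i kx}$ with mean zero, the formal solution of the coboundary equation is $\varphi(x)=\sum_{k\neq 0}\frac{\hat g(k)}{e^{2\pi i k\alpha}-1}e^{2\pi i kx}$, so continuity of $\varphi$ is governed by how fast the small denominators $|e^{2\pi i k\alpha}-1|\asymp \|k\alpha\|$ shrink relative to the decay of $\hat g(k)$. The hypothesis $\nu(2\pi\alpha)=0$ is exactly a Diophantine-type statement measuring how well $\alpha$ is approximated by rationals (it forces $\alpha$ to admit very good rational approximations along a subsequence), which means $\|k_n\alpha\|$ can be made extremely small along some $k_n\to\infty$. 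I would then choose the Fourier coefficients $\hat g(k)$ supported on (or concentrated near) that subsequence $k_n$, with amplitudes large enough that $|\hat g(k_n)|/\|k_n\alpha\|$ fails to be summable or even fails to tend to zero, so that $\varphi$ cannot be continuous, yet small enough that $g$ itself remains continuous and of bounded variation. Balancing these two competing rates is the technical crux.

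The main obstacle is precisely this simultaneous control: I must make $g$ smooth enough (continuous, bounded variation, so that Theorem A applies and the coboundary-failing coefficients do not destroy the regularity of $g$) while making the would-be transfer function $\varphi$ badly non-continuous. This is where the quantitative strength of the condition $\nu(2\pi\alpha)=0$ is essential — it guarantees small denominators small enough that a bounded-variation $g$ can still defeat continuity of $\varphi$, whereas for a Diophantine (badly approximable) $\alpha$ no such bounded-variation obstruction exists. A clean way to realize this is a lacunary/Weierstrass-type series $g(x)=\sum_n a_n\cos(2\pi k_n x)$ along the good-approximation subsequence, with $\{a_n\}$ chosen of bounded variation (e.g. geometric or suitably decaying) but with $a_n/\|k_n\alpha\|\not\to 0$.

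Finally I would verify minimality and distality of $(\T^2,T)$. The system is a group (skew-product) extension of the irrational rotation by the circle, hence is automatically distal as an isometric extension of an equicontinuous system; distality therefore needs only the standard skew-product argument and I expect it to follow from general structure theory once the cocycle is continuous. For minimality I would invoke the criterion for skew products over irrational rotations: the extension is minimal iff the cocycle $f$ is not cohomologous (in the measurable/continuous sense) to a cocycle taking values in a proper closed subgroup, equivalently iff the essential values of the cocycle generate the whole circle; since $l\neq 0$ the degree is nonzero and the relevant obstruction vanishes, so minimality holds. I would make sure the construction of $g$ does not accidentally force $f$ into a reducible form, but this is compatible with the non-coboundary requirement and should follow from the criteria established earlier in the paper for when $(\T^2,T)$ is minimal.
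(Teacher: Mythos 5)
Your proposal follows essentially the same route as the paper: reduce via Theorems A and B to constructing a bounded-variation $g\in\mathcal F_0$ whose Fourier coefficients, concentrated on a subsequence $n_k$ with $n_k\|n_k\alpha\|\to 0$ supplied by $\nu(2\pi\alpha)=0$, are large enough relative to the small denominators $e^{2\pi i n_k\alpha}-1$ that the transfer function $\varphi$ cannot exist (the paper takes $a_{n_k}=e^{2\pi i n_k\alpha}-1$, forcing $b_{n_k}=1$ and hence $\varphi\notin L^2$), while minimality and distality come from the paper's earlier lemmas. The balancing act you identify as the technical crux is resolved exactly as you describe, so the proposal is correct and matches the paper's argument.
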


\medskip
 For readers interested in zero entropy diffeomorphisms on manifolds (particularly $\T^2$),
 it is worth mentioning that, Fr\k{a}czek \cite{Fra1, Fra2} concentrated on ergodic diffeomorphisms of $\T^2$ 
 with polynomial (or linear) growth of the derivative and obtained that they are (in some sense) ``conjugate''  to 
  (\ref{defineT}) with $l\neq 0$.

The paper is organized as follows. In Section 2, some definitions  and related
results are introduced. In Section 3, it is proved that if $f$ satisfies some mild conditions,
then its topological complexity  of $(\T^2, T)$ is low. 
In  Section 4, it is shown that some system $(\T^2, T)$ is minimal distal 
and its maximal equicontinuous factor is the irrational  rotation.
In Section 5, we give the main result in this paper, 
that is, there exists a minimal distal system $(\T^2, T)$
such that its topological complexity is low and it is not a system of order 2.

\noindent{\bf Acknowledgement:} I would like to thank Professors Wen Huang and Xiangdong Ye for their useful suggestions.

\section{Preliminaries}
\subsection{Topological dynamical systems}
A {\it topological dynamical system} (t.d.s. for short) is a pair $(X, T)$, where $X$ is a compact metric space and
$T: X\rightarrow X$ is a homeomorphism from $X$ to itself. We use $d$ to denote the metric on $X$.

A t.d.s. $(X,T)$ is {\it transitive} if for any non-empty open sets $U$ and $V$ in $X$, there exists $n\in \mathbb{Z}$ such that $U \cap T^{n}V\neq \emptyset$. 
We say $x\in X$ is a {\it transitive point} if its orbit orb$(x, T)=\{x, Tx, T^2x, \cdots\}$ is dense in $X$.
A  t.d.s. $(X, T)$  is {\it minimal} if the orbit of any point is dense in $X$.
We say $x\in X$ is a {\it minimal point} if $(\overline{\text{orb}(x, T)}, T)$ is a minimal subsystem of $(X, T)$.

Let $(X, T)$ be a t.d.s.and $(x, y)\in X\times X$. We say that $(x, y)$ is  a {\it proximal pair} if
$$\inf _{n \in \mathbb{Z}}d(T^nx, T^ny)=0,$$
and it is a{ \it distal pair }if it is not proximal. A t.d.s. $(X, T)$ is called {\it distal}
if $(x, y)$ is distal whenever $x ,y\in X$ are distinct. The following result is classical.

\begin{Lemma} \label{Q15}\textnormal{(See \cite{JA})}
Suppose $(X, T)$ is a distal t.d.s., then for any point $x\in X$, $x$ is a minimal point.
In particular, if $(X, T)$ is distal, then $(X, T)$ is minimal if and only if $(X, T)$ has a transitive point.
\end{Lemma}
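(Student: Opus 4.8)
The plan is to prove the two assertions in turn, the first being the crux. To show that every $x \in X$ is a minimal point, it suffices to establish that the relation ``$y \in \overline{\orb(x,T)}$'' is symmetric. Granting symmetry, pick a minimal subset $M \subseteq \overline{\orb(x,T)}$ (one exists by Zorn's lemma applied to the nonempty closed $T$-invariant subsets) and any $m \in M$; symmetry then forces $x \in \overline{\orb(m,T)} \subseteq M$, so $\overline{\orb(x,T)} \subseteq M$, and hence $\overline{\orb(x,T)} = M$ is minimal. Thus $(\overline{\orb(x,T)},T)$ is a minimal subsystem, i.e. $x$ is a minimal point.

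To obtain the symmetry I would pass to the enveloping (Ellis) semigroup $E(X)=\overline{\{T^n : n\in\mathbb Z\}}$ taken in the product topology on $X^X$, with composition as the operation. Two standard facts are needed. First, $E(X)$ is a compact right-topological semigroup and $\overline{\orb(x,T)}=E(X)x$ for every $x$: indeed $E(X)x$ is a closed $T$-invariant set with $\orb(x,T)\subseteq E(X)x\subseteq\overline{\orb(x,T)}$, the last inclusion by continuity of evaluation. Second, by Ellis's theorem $(X,T)$ is distal if and only if $E(X)$ is a group. Granting these, if $y\in\overline{\orb(x,T)}$ then $y=px$ for some $p\in E(X)$; since $E(X)$ is a group, $p^{-1}\in E(X)$ and $x=p^{-1}y\in E(X)y=\overline{\orb(y,T)}$, which is precisely the desired symmetry.

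For the ``in particular'' statement, one direction is immediate and needs no distality: if $(X,T)$ is minimal then every point has dense orbit, so in particular $(X,T)$ has a transitive point. Conversely, suppose $(X,T)$ is distal and has a transitive point $x$, so that $\overline{\orb(x,T)}=X$. By the first part $x$ is a minimal point, which says exactly that $(\overline{\orb(x,T)},T)=(X,T)$ is minimal.

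The main obstacle is the input from Ellis's theory, namely the equivalence of distality with $E(X)$ being a group; this is the nontrivial classical fact, its proof resting on analysing the idempotents of $E(X)$ and showing that distality forces the identity to be the only idempotent. As the lemma is quoted as classical with the reference \cite{JA}, I would invoke this equivalence rather than reprove it. A secondary technical point to verify carefully is that the definitions here use the one-sided forward orbit $\{x,Tx,T^2x,\dots\}$, so one must check that the enveloping semigroup built from forward iterates is still a group under distality (equivalently, that $\overline{\orb(x,T)}$ is unchanged whether one uses forward or two-sided iterates on a distal system), which makes the symmetry argument apply verbatim to the forward orbits used in the statement.
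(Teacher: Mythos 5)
The paper gives no proof of this lemma at all --- it is quoted as classical with a citation to Auslander's book --- and your argument is precisely the standard proof from that source: Ellis's theorem that distality is equivalent to the enveloping semigroup $E(X)$ being a group, followed by the symmetry of the relation $y\in\overline{\orb(x,T)}$ and the resulting minimality of every orbit closure. Your proof is correct, and you rightly isolate the one delicate point, namely reconciling the one-sided orbit $\{x,Tx,T^2x,\dots\}$ used in the paper's definitions with the two-sided enveloping semigroup (which is harmless here, since in a distal system every full orbit closure is minimal and the forward orbit of any point of a minimal $\mathbb{Z}$-system is already dense).
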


A {\it homomorphism} $\pi: X\rightarrow Y$ between topological dynamical systems $(X, T)$ and $(Y, S)$ is a continuous onto map such that $\pi\circ T=S\circ \pi$; one says that $(Y, S)$ is a {\it factor} of $(X, T)$ and that $(X, T)$ is an {\it extension} of $(Y, S)$, and one also refers to $\pi$ as a {\it factor map} or an {\it extension}. The systems are said to be {\it conjugate} if $\pi$ is bijective.

Given a t.d.s. $(X, T)$, define the {\it regionally proximal relation}:
$$Q(X, T)=\bigcap_{k=1}^{+\infty}\overline{\bigcup_{n=-\infty}^{+\infty}(T\times T)^{-n}\Delta_{\frac{1}{k}}},$$
where $\Delta_{\frac{1}{k}}:=\{(x,y)\in X\times X:d(x,y)<1/k\}.$
It is clear that  $(x, y)\in Q(X, T)$ if and only if for any $\epsilon >0$, any neighbourhoods $U$ and $V$ of $x$ and $y$ respectively, there exist $x'\in U$, $y'\in V$ and $n\in \mathbb{Z} $ such that $d(T^nx',  T^ny')<\epsilon$.

A t.d.s. $(X, T)$ is said to be {\it equicontinuous} if the family of $\{T^n: n\in \mathbb{Z} \}$ is equicontinuous,
that is , for any $\epsilon >0$, there exists $\delta>0$ such that if $d(x_1, x_2)<\delta$, 
then $d(T^nx_1, T^nx_2)<\epsilon$ for any $n\in \mathbb{Z} $. The following result is well known.

\begin{Lemma}\label{lm1}\textnormal{(See \cite[Chapter 5]{JA})}
Suppose $\pi: (X, T)\rightarrow(Y,S)$ is  an extension between  two t.d.s.. Then the following are equivalent:

\begin{enumerate}[$(1)$]
\item $(Y, S)$ is equicontinuous.

\item $Q(X,T)\subset R_\pi$, where $R_\pi=\{(x, y)\in X\times X: \pi(x)=\pi(y)\}$.
\end{enumerate}
\end{Lemma}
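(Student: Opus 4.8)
The plan is to prove the two implications separately, leaning on two classical properties of the regionally proximal relation: its behaviour under a factor map, and the intrinsic characterization that a t.d.s. is equicontinuous precisely when its regionally proximal relation reduces to the diagonal.

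For $(1)\Rightarrow(2)$, I would first record the easy half of that characterization: if $(Y,S)$ is equicontinuous then $Q(Y,S)=\Delta_Y$. Indeed, given $(u,v)\in Q(Y,S)$ with approximating data $u_k\to u$, $v_k\to v$ and $n_k\in\mathbb{Z}$ such that $d_Y(S^{n_k}u_k,S^{n_k}v_k)\to0$, I apply $S^{-n_k}$ and use that the family $\{S^n:n\in\mathbb{Z}\}$ is equicontinuous to obtain $d_Y(u_k,v_k)\to0$, forcing $u=v$. Next I would push $Q$ forward: for $(x,y)\in Q(X,T)$ choose $x_k\to x$, $y_k\to y$ and $n_k$ with $d(T^{n_k}x_k,T^{n_k}y_k)\to0$; since $\pi$ is continuous on a compact space it is uniformly continuous, and $\pi\circ T=S\circ\pi$, so $\pi(x_k)\to\pi(x)$, $\pi(y_k)\to\pi(y)$ and $d_Y(S^{n_k}\pi(x_k),S^{n_k}\pi(y_k))\to0$. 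Hence $(\pi(x),\pi(y))\in Q(Y,S)=\Delta_Y$, that is $\pi(x)=\pi(y)$, so $(x,y)\in R_\pi$; this gives $Q(X,T)\subseteq R_\pi$.

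For $(2)\Rightarrow(1)$, I would argue that $Q(X,T)\subseteq R_\pi$ forces $Q(Y,S)=\Delta_Y$, and then invoke the substantive half of the characterization, namely that $Q(Y,S)=\Delta_Y$ implies $(Y,S)$ is equicontinuous, which I would quote from \cite[Chapter 5]{JA} rather than reprove. To see $Q(Y,S)=\Delta_Y$, take any $(u,v)\in Q(Y,S)$. The crucial input is the lifting property of the regionally proximal relation along a factor map, $(\pi\times\pi)\big(Q(X,T)\big)=Q(Y,S)$: it produces $(x,y)\in Q(X,T)$ with $\pi(x)=u$ and $\pi(y)=v$. Since $Q(X,T)\subseteq R_\pi$, we get $u=\pi(x)=\pi(y)=v$, so $Q(Y,S)\subseteq\Delta_Y$ and equality follows.

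The main obstacle is precisely this lifting step, $Q(Y,S)\subseteq(\pi\times\pi)(Q(X,T))$, together with the hard direction of the characterization. The naive attempt to lift---choosing preimages of an approximating sequence in $Y$ and passing to a convergent subsequence in $X$---breaks down because the closeness of $S^{n_k}u_k$ and $S^{n_k}v_k$ in $Y$ says nothing about the distance between the chosen preimages under $T^{n_k}$ in $X$. Both the lifting and the equivalence of $Q(Y,S)=\Delta_Y$ with equicontinuity are established via the enveloping (Ellis) semigroup and the structure theory of minimal systems, so I would cite them directly from \cite[Chapter 5]{JA} and devote the write-up to the two elementary reductions above.
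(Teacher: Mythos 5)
The paper offers no proof of this lemma at all---it is quoted from \cite[Chapter 5]{JA}, with the remark that follows it indicating the intended mechanism---so the comparison is with the standard argument that the citation points to. Your direction $(1)\Rightarrow(2)$ is complete and correct: the push-forward inclusion $(\pi\times\pi)(Q(X,T))\subseteq Q(Y,S)$ is elementary, and $Q(Y,S)=\Delta_Y$ for equicontinuous $(Y,S)$ follows as you say by applying $S^{-n_k}$ (note this uses equicontinuity of the full family $\{S^n:n\in\mathbb{Z}\}$, which is exactly the paper's definition). Incidentally, the converse of that characterization, which you label as the substantive half and defer to a citation, is also elementary for $\mathbb{Z}$-actions by homeomorphisms: if $(Y,S)$ is not equicontinuous, take $u_k,v_k$ with $d(u_k,v_k)\to 0$ and $d(S^{n_k}u_k,S^{n_k}v_k)\ge\epsilon$, pass to subsequential limits $S^{n_k}u_k\to a$, $S^{n_k}v_k\to b$ with $a\neq b$, and observe that applying $S^{-n_k}$ exhibits $(a,b)$ as a nondiagonal element of $Q(Y,S)$.

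The genuine issue is in $(2)\Rightarrow(1)$: the lifting inclusion $Q(Y,S)\subseteq(\pi\times\pi)(Q(X,T))$, on which your whole argument hinges, is a theorem about homomorphisms of \emph{minimal} flows (its proof uses dense almost periodic points and the enveloping semigroup), whereas the lemma is stated for arbitrary extensions of t.d.s.; you cannot simply cite it in this generality, and, as you yourself observe, the naive topological lifting of close points fails because fibers of $\pi$ need not vary lower semicontinuously. The standard route---and the one the paper implicitly endorses in the sentence following the lemma---avoids lifting altogether: $R_\pi$ is a closed invariant equivalence relation containing $Q(X,T)$, hence it contains the equicontinuous structure relation $S_{eq}(X)$, the smallest such relation; therefore $Y\cong X/R_\pi$ is a factor of the maximal equicontinuous factor $X/S_{eq}(X)$, and a factor of an equicontinuous system is equicontinuous (for instance because the closure of $\{T^n\}$ in $C(X,X)$ is a compact group of homeomorphisms which descends to the quotient). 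Since the paper only ever applies the lemma to the minimal extension $(\T^2,T)\to(\T^1,\tau)$, your argument does cover the case actually used, but to prove the lemma as stated you should either add a minimality hypothesis before invoking the lifting theorem or switch to the $S_{eq}$ argument.
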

\noindent In particular, the maximal equicontinuous factor of $(X, T)$  is induced by the smallest closed invariant equivalence relation containing $Q(X, T)$.

\subsection{Topological complexity}
Let $(X, T)$ be a t.d.s. and denote by $d$ the metric on $X$.
For any $n\in\mathbb N$   and $\epsilon >0$, a subset $F$ of $X$ is said to be an {\it $(n, \epsilon)$-spanning set of $X$} with respect to $T$ if for any $x\in X$, there exists $y\in F$ with $d_n(x, y)\leq \epsilon$, where
 $$d_n(x, y)=\max\limits_{0\leq i\leq n-1} d(T^i(x), T^i(y)).$$
 Let $r(n, \epsilon)$ denote the smallest cardinality of all $(n, \epsilon)$-spanning subsets of $X$ with respect to $T$, we call $r(n, \epsilon)$ the {\it topological complexity }of the system $(X, T)$. We write $r(n, \epsilon, T)$ to emphasise $T$ if we need to.
We can also define topological complexity in terms of $(n, \epsilon)$-separated set.
A subset $E$ of $X$ is said to be an { \it $(n ,\epsilon)$-separated } set of $X$ with respect to $T$ if $x, y\in E, x\neq y$, implies $d_n(x, y)>\epsilon$, where $d_n(x, y)$ is defined as mentioned above.  Let $s(n, \epsilon)$ denote the largest cardinality of all $(n, \epsilon)$ separated subsets of $X$ with respect to $T$. We write $s(n, \epsilon, T)$ to emphasise $T$ if we need to.

We have $$r(n, \epsilon)\leq s(n, \epsilon)\leq r(n, {\epsilon}/{2})$$
 for any $\epsilon>0$ and $n\in \mathbb{N}$ (see \cite[Page 169]{PW} for details).

\subsection{Unique ergodicity}
Suppose $(X,  \mathcal{B}(X), \mu)$ is a probability space, where $X$ is a compact metrisable space and
$\mathcal{B}(X)$ is the smallest $\sigma$-algebra generated by all open subsets of $X$.

A continuous transformation $T: X\rightarrow X$ is called {\it uniquely ergodic} if there is only one $T$-invariant  Borel probability measure $\mu$ on $X$,
 i.e. $\mu(T^{-1}(B))=\mu(B)$ for all $B\in \mathcal{B}(X)$.

It is well known that if $T(x)=ax$ is a rotation on the compact metrizable group $G$, then $T$ is uniquely ergodic iff $T$ is minimal.
The Haar measure is the only $T$-invariant measure (see for example \cite[Page 162]{PW}).

\begin{Lemma}\label{R}\textnormal{ (See \cite{JCO})}
Let $T: X \rightarrow X$ be a continuous transformation of a compact metrizable space $X$. Then the following are equivalent:
\begin{enumerate}[$(1)$]
\item T is uniquely ergodic.

\item  There exists a $T$-invariant Borel probability measure $\mu$ such that for all $f\in C(X)$ and all $x\in X$,
$$\frac{1}{n}\sum_{i=0}^{n-1}f(T^ix) \rightarrow \int_X fd{\mu}$$ as $n\rightarrow +\infty$.

\end{enumerate}

\end{Lemma}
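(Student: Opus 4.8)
The plan is to prove the two implications separately. The implication $(2)\Rightarrow(1)$ is the easier direction: I would show that the hypothesized measure $\mu$ must coincide with any $T$-invariant Borel probability measure, so it is the unique one. The implication $(1)\Rightarrow(2)$ is where the real work lies, and I would argue by contradiction, exploiting the weak-$*$ compactness of the space of probability measures on the compact metrizable space $X$.

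For $(2)\Rightarrow(1)$, let $\nu$ be an arbitrary $T$-invariant Borel probability measure and fix $f\in C(X)$. By the $T$-invariance of $\nu$,
$$\int_X f\, d\nu = \int_X \frac{1}{n}\sum_{i=0}^{n-1} f(T^i x)\, d\nu(x)\quad\text{for every } n.$$
The integrands are uniformly bounded by $\|f\|_\infty$ and, by hypothesis, converge pointwise to the constant $\int_X f\, d\mu$, so the dominated convergence theorem yields $\int_X f\, d\nu = \int_X f\, d\mu$. Since this holds for all $f\in C(X)$, the Riesz representation theorem gives $\nu=\mu$, which proves uniqueness and hence $(1)$.

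For $(1)\Rightarrow(2)$, fix $x\in X$ and consider the empirical measures $\mu_n := \frac{1}{n}\sum_{i=0}^{n-1}\delta_{T^i x}$, each a Borel probability measure on $X$. Suppose, for contradiction, that for some $f\in C(X)$ the averages $\frac{1}{n}\sum_{i=0}^{n-1} f(T^i x)=\int_X f\, d\mu_n$ fail to converge to $\int_X f\, d\mu$; then along some subsequence $(n_k)$ they stay at least $\epsilon>0$ away from $\int_X f\, d\mu$. Because $X$ is compact metrizable, $C(X)$ is separable, so the unit ball of $C(X)^*$ is weak-$*$ sequentially compact; passing to a further subsequence, still denoted $(\mu_{n_k})$, we may assume $\mu_{n_k}$ converges weak-$*$ to some probability measure $\nu$. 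The key computation is that $\nu$ is $T$-invariant: for any $g\in C(X)$,
$$\int_X g\, d\mu_n - \int_X g\circ T\, d\mu_n = \frac{1}{n}\big(g(x)-g(T^n x)\big),$$
which tends to $0$ as $n\to\infty$; letting $n=n_k\to\infty$ gives $\int_X g\, d\nu = \int_X g\circ T\, d\nu$ for all $g$, i.e. $\nu$ is $T$-invariant. Unique ergodicity then forces $\nu=\mu$, whence $\int_X f\, d\mu_{n_k}\to\int_X f\, d\nu=\int_X f\, d\mu$, contradicting the choice of the subsequence. This establishes the convergence in $(2)$ for every $f\in C(X)$ and every $x\in X$.

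The main obstacle is the $(1)\Rightarrow(2)$ direction, and within it the decisive point is verifying that every weak-$*$ cluster point $\nu$ of the empirical measures $\mu_n$ is $T$-invariant; once this is secured, unique ergodicity collapses all cluster points onto $\mu$ and the contradiction argument closes. The only other step requiring care is the appeal to sequential weak-$*$ compactness, which rests on the separability of $C(X)$ furnished by the compact metrizability of $X$.
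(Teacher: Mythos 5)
Your proof is correct and is the standard argument (essentially the one found in Oxtoby's paper and in Walters' textbook, which the paper cites for this lemma): dominated convergence plus the Riesz representation theorem for $(2)\Rightarrow(1)$, and weak-$*$ sequential compactness of the empirical measures together with the telescoping identity $\int g\,d\mu_n-\int g\circ T\,d\mu_n=\frac{1}{n}(g(x)-g(T^nx))$ for $(1)\Rightarrow(2)$. The paper itself offers no proof --- it simply cites \cite{JCO} --- so there is nothing to compare against; the only point worth making explicit in your write-up is that the weak-$*$ limit $\nu$ is indeed a \emph{probability} measure, which follows on a compact space by testing against the constant function $1$.
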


\subsection{Nilpotent groups, topologically nilpotent groups and systems of order 2 }
Let $G$ be a group. For $g, h\in G$, we write$[g, h]=ghg^{-1}h^{-1}$ for the commutator of $g$ and $h$
and for $A, B\subset G$, we write $[A, B]$ for the subgroup spanned by $\{[a, b]: a\in A, b\in B\}$.
The commutator subgroups $G_j, j\geq 1$, are defined inductively by setting $G_1=G $ and $G_{j+1}=[G_j, G]$.
Let $d\geq 1$ be an integer, we say that $G$ is {\it $d$-step nilpotent }if $G_{d+1}$ is the trivial subgroup.

Let $G$ be a $d$-step nilpotent Lie group and $\Gamma$ a discrete cocompact subgroup of $G$. The compact
manifold $X=G/\Gamma$ is called a {\it $d$-step nilmanifold}. Since $G$ is a nilpotent Lie group,
the commutators subgroups are closed and then, in this case the notions of $d$-step nilpotent and
$d$-step topologically nilpotent coincide (see for example  \cite{AM}). The group $G$ acts on $X$ by left
translations and we write this action as $(g, x)\mapsto gx$. Let $\tau\in G$ and $T$ be the
transformation $x\mapsto \tau x$, then $(X,T)$ is called a {\it $d$-step nilsystem}.

The enveloping semigroup  (or Ellis semigroup) $E(X)$ of a topological dynamical system $(X, T)$
is defined as the closure in $X^X$ of the set $\{T^n: n \in \mathbb{Z} \}$ endowed with the product topology.

Let $(Y, S)$ be a t.d.s., $K$ a compact group, and $\phi: Y\rightarrow K$ a continuous mapping.
Form $X=Y\times K$ and define $T: X\rightarrow X$ by $T(y, k)=(Sy, \phi(y)k)$.
The resulting system $(X, T)$ is called a {\it group extension} of $(Y, S)$.
It is obvious that the system $(\T^2, T)$ defined in (\ref{defineT}) is a group extension of 
an irrational rotation on $\T^1$ by taking $\phi=f$.

The following theorem relates the notion of system of order 2 and nilpotent group which
will be used in this paper. We recall that a  minimal topological
dynamical system is {\it a system of  order d} if it is
an inverse limit of $d$-step nilsystems. In particular, a 2-step nilsystem is a system of  order 2.
\begin{Theorem}\label{lm4}\textnormal{(See \cite[Theorem 1.2]{SD})}
Let $(X, T)$ be a minimal t.d.s.. Then the following are equivalent:
\begin{enumerate}[$(1)$]
\item $(X ,T)$ is a system of order 2.

\item $E(X)$ is a 2-step nilpotent group and $(X, T)$ is a group extension of an equicontinuous system.

\end{enumerate}
\end{Theorem}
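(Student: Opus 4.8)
The plan is to route the equivalence through the structure theory of minimal distal systems and the characterization of systems of order $2$ by triviality of the higher-order regionally proximal relation $RP^{[2]}$ in the sense of Host--Kra--Maass: a minimal system is of order $2$ exactly when $RP^{[2]}(X,T)=\Delta_X$, the diagonal. Both conditions (1) and (2) force $(X,T)$ to be distal (requiring $E(X)$ to be a group already does this, by Ellis's theorem, and systems of order $2$ are distal), so in each case $E(X)$ is a group acting transitively on $X$, and I may write $X\cong E(X)/\mathrm{Stab}(x_0)$ for a base point $x_0$. This dictionary converts dynamical statements about $RP^{[\bullet]}$ into algebraic statements about the lower central series $E(X)_1\supseteq E(X)_2\supseteq E(X)_3\supseteq\cdots$. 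The technical heart, to be isolated as a separate lemma, is the identification
\[
RP^{[2]}(X,T)=\overline{\{(x,px):x\in X,\ p\in E(X)_3\}},
\]
asserting that the order-$2$ regionally proximal relation is the (closure of the) orbit relation of the third commutator subgroup of the Ellis group; granting this, $(X,T)$ is of order $2$ iff $E(X)_3$ acts trivially, which is essentially $2$-step nilpotency of $E(X)$ (care with closures and topological versus algebraic nilpotency being needed, as in the remarks on topologically nilpotent groups in Section~2.4).

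For $(1)\Rightarrow(2)$ I would first note that $2$-step nilsystems are distal and inverse limits of distal systems are distal, so a system of order $2$ is distal and hence $E(X)$ is a group. To get $2$-step nilpotency, a direct analysis of the enveloping semigroup of a single minimal $2$-step nilsystem $G/\Gamma$ shows $E(G/\Gamma)$ is $2$-step nilpotent, its lower central series terminating at the same step as that of $G$; the property then passes to the inverse limit, since $E(\varprojlim X_i)$ embeds into $\varprojlim E(X_i)$ and the identity $[[a,b],c]=e$ is preserved under passage to subgroups and to projective limits. For the group-extension clause I would identify the maximal equicontinuous factor with $X/Q(X,T)$ via Lemma~\ref{lm1}, observe that for $G/\Gamma$ this factor is $(G/G_2)/\Gamma'$ with $G_2$ central, and verify that $X\to X/Q(X,T)$ is the quotient by the abelian, central fiber group $G_2/(G_2\cap\Gamma)$, hence a group extension in the sense of Section~2.4; taking the inverse limit preserves this structure.

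For $(2)\Rightarrow(1)$ I would again begin from the distality reduction: since $E(X)$ is a group, $(X,T)$ is distal and, by Lemma~\ref{lm1}, $Q(X,T)$ is a closed invariant equivalence relation whose quotient is the maximal equicontinuous factor. The group-extension hypothesis supplies an isometric tower $X\to Y\to\{\mathrm{pt}\}$ with $Y$ equicontinuous, so by the Furstenberg structure theorem $X$ has distal height at most $2$. Combining this with $2$-step nilpotency of $E(X)$, which gives $E(X)_3=\{e\}$, the bridging lemma above yields $RP^{[2]}(X,T)=\Delta_X$, and the Host--Kra--Maass criterion then gives that $(X,T)$ is a system of order $2$; the realization of $X$ as an honest inverse limit of $2$-step nilsystems (rather than merely a system with trivial $RP^{[2]}$) follows from the structure theorem for such systems.

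The main obstacle is the bridging lemma identifying $RP^{[2]}$ with the orbit relation of $E(X)_3$. One containment is comparatively soft: a commutator of translations in $E(X)$ that fails to fix a point manufactures order-$2$ regionally proximal pairs. The reverse containment, that \emph{every} order-$2$ regionally proximal pair arises from the third commutator subgroup, is delicate, and this is precisely where the group-extension hypothesis is indispensable: without the isometric tower one cannot control the dynamical cube structure $Q^{[2]}(X)$ finely enough to relate the face transformations living in $E(X)$ to the lower central series. I expect this step to demand a careful analysis of cube groups and minimal idempotents in $E(X)$, and it is the reason both the algebraic hypothesis ($2$-step nilpotent) and the structural hypothesis (group extension over an equicontinuous base) must appear together in~(2).
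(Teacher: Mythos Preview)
The paper does not prove this theorem: it is quoted from Donoso~\cite[Theorem~1.2]{SD} and used as a black box in the proof of Theorem~B, so there is no ``paper's own proof'' against which to compare your proposal.

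That said, your outline is broadly consonant with the strategy in~\cite{SD}, which also routes the equivalence through the Host--Kra--Maass characterization via $RP^{[d]}$ and relates it to the commutator structure of the enveloping semigroup. Two points deserve care if you flesh this out. First, in the $(1)\Rightarrow(2)$ direction, the assertion that an inverse limit of group extensions of equicontinuous systems is again a group extension of an equicontinuous system is not automatic: the fiber groups and cocycles must be organized compatibly across the inverse system, and this requires an argument. Second, your ``bridging lemma'' identifying $RP^{[2]}$ with the closure of the $E(X)_3$-orbit relation is indeed the heart of the matter, but $E(X)$ is only a compact right-topological semigroup, not a topological group, so commutator subgroups must be taken in the topological sense (closures of algebraic commutators) and the interaction between the algebraic lower central series and its topological closure must be tracked explicitly; your parenthetical nod to this is correct in spirit, but in a full proof it is where most of the work lies.
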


The following theorem gives a more explicit characterization for the enveloping semigroup $E(\T^2)$ to become 2-step nilpotent.

\begin{Theorem}\label{lm2}\textnormal{ (See \cite[Theorem 2.3]{EG})}
Suppose  $(\T^2,T)$ is minimal, which is the t.d.s. defined in (\ref{defineT}) such that $f\in \mathcal{F}_l$, $l\not=0$, $\alpha \in \mathbb{R}\setminus\mathbb{Q}$ and that the projection $(\T^2, T)\xrightarrow{\pi} (\T^1, \tau)$ onto the first coordinate is the maximal equicontinuous factor,
 where $\tau: \T^1\rightarrow \T^1, x\mapsto x+\alpha$. Then the following are equivalent:

\begin{enumerate}[$(1)$]
\item There exist $\varphi \in \mathcal{F}_0$ and $c \in \mathbb{R}$ such that $f(x)=\varphi (x+\alpha)-\varphi (x)+lx+c$.

\item The system $(\T^2, T)$ satisfies that $E(\T^2)(\text{as an abstract group})$ is 2-step nilpotent.
\end{enumerate}

\end{Theorem}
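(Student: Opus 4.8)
The plan is to realize the enveloping semigroup $E(\T^2)$ explicitly and then to reduce $2$-step nilpotency to a coboundary condition on the cocycle defining $f$. Writing $T^n(x,y)=(x+n\alpha,\ y+f_n(x))$ with $f_n(x)=\sum_{k=0}^{n-1}f(x+k\alpha)$, and splitting $f(x)=lx+\tilde f(x)$ with $\tilde f\in\mathcal{F}_0$, one computes
$$f_n(x)=lnx+l\alpha\binom{n}{2}+\tilde f_n(x),\qquad \tilde f_n(x)=\sum_{k=0}^{n-1}\tilde f(x+k\alpha).$$
Since $(\T^1,\tau)$ is equicontinuous and $\{n\alpha\}$ is dense, every $p\in E(\T^2)$ is a pointwise limit $p=\lim_i T^{n_i}$ of the form $p(x,y)=(x+t,\ y+\rho(x))$, where $t=\lim_i n_i\alpha$ and $\rho(x)=\lim_i f_{n_i}(x)\pmod 1$; passing to a subnet, compactness of $(\T^2)^{\T^2}$ guarantees the pointwise limit exists for every $x$. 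I would first record the composition law $(t_1,\rho_1)(t_2,\rho_2)=(t_1+t_2,\ \rho_2(\cdot)+\rho_1(\cdot+t_2))$ and then derive the commutator formula: $[(t_1,\rho_1),(t_2,\rho_2)]$ fixes the first coordinate and acts on the fibre by
$$\rho_{\mathrm{comm}}(x)=\big[\rho_1(x-t_1)-\rho_1(x-t_1-t_2)\big]+\big[\rho_2(x-t_1-t_2)-\rho_2(x-t_2)\big].$$
This identity is the engine of the whole argument: it shows that $2$-step nilpotency is governed by whether the shift increments $\rho(x+s)-\rho(x)$ of the limit cocycles are independent of $x$, that is, whether each $\rho$ has the form (additive homomorphism)$+$(constant).

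For $(1)\Rightarrow(2)$ I would use condition $(1)$ to conjugate to a linear skew product. With $\Phi(x,y)=(x,y+\varphi(x))$ one checks $\Phi^{-1}T\Phi(x,y)=(x+\alpha,\ y+lx+c)$, and since conjugate systems have isomorphic enveloping semigroups it suffices to treat $S(x,y)=(x+\alpha,y+lx+c)$. Here $S^n$ has cocycle $lnx+l\alpha\binom{n}{2}+nc$, so evaluating at $x=0$ shows every limit cocycle splits as $\rho(x)=\rho(0)+\psi_0(x)$, where $\psi_0(x)=\lim_i ln_ix\pmod 1$ is a (possibly discontinuous) additive homomorphism of $\T^1$ with $\psi_0(0)=0$. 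Additivity gives $\rho(x+s)-\rho(x)=\psi_0(s)$ for every $s$, so the commutator formula collapses to the $x$-independent constant $\rho_{\mathrm{comm}}=\psi_{0,1}(t_2)-\psi_{0,2}(t_1)$. Thus every commutator is a vertical rotation $(x,y)\mapsto(x,y+\mathrm{const})$; these are central, and a further commutator with any element is trivial, so $[[E(\T^2),E(\T^2)],E(\T^2)]=\{e\}$ and $E(\T^2)$ is $2$-step nilpotent.

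For the converse $(2)\Rightarrow(1)$ I would argue by contraposition, producing a non-central commutator once $(1)$ fails. The bridge to cohomology is the Gottschalk--Hedlund theorem: condition $(1)$ is precisely the assertion that $\tilde f$ is continuously cohomologous over $\tau$ to the constant $c$, which, by minimality of $(\T^1,\tau)$, is equivalent to uniform boundedness of the drift-corrected Birkhoff sums $\tilde f_n(x)-nc$. If $(1)$ fails for every $c$, these sums are unbounded, and I would extract a subnet along which $\tilde f_{n_i}$ converges pointwise $\pmod 1$ to a limit $g$ that is \emph{not} of additive$+$constant form; feeding such a $\rho=\psi_0+g+\mathrm{const}$ into the commutator formula yields a commutator whose fibre cocycle $\rho_{\mathrm{comm}}$ genuinely depends on $x$, and hence fails to commute with a generic fibre-varying element of $E(\T^2)$. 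The main obstacle is exactly this last step: one must show that failure of the continuous coboundary equation forces some limit cocycle to have non-constant shift increments, and then exhibit explicit $p_1,p_2\in E(\T^2)$ realizing a non-central commutator. Controlling the pointwise closure of $\{\tilde f_n\}$ modulo $1$ — isolating the genuinely functional part $g$ from the affine part $\psi_0(x)+\mathrm{const}$ — is the delicate technical heart, and is where the irrationality of $\alpha$ and the hypothesis $l\neq0$ enter decisively.
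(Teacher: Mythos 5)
The paper does not prove this statement at all: it is quoted from Glasner \cite[Theorem 2.3]{EG}, so there is no internal proof to compare against. Judged on its own merits, your setup is sound: the realization of $E(\T^2)$ as pointwise limits $(x,y)\mapsto(x+t,y+\rho(x))$, the composition law, and the commutator formula are all correct, and the direction $(1)\Rightarrow(2)$ is essentially complete. After conjugating by $\Phi(x,y)=(x,y+\varphi(x))$, every limit cocycle of the linear model $S(x,y)=(x+\alpha,y+lx+c)$ does split as a constant plus the additive endomorphism $\psi_0=\lim_i ln_i(\cdot)$ of $\T^1$, the commutator collapses to the constant $\psi_{0,1}(t_2)-\psi_{0,2}(t_1)$, and constant fibre translations are central because the first coordinates of elements of $E(\T^2)$ exhaust $\T^1$.

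The direction $(2)\Rightarrow(1)$, however, contains a genuine gap, which you yourself flag. Two separate things are missing. First, your ``engine'' is only proved one way: constancy of all shift increments $\rho(x+s)-\rho(x)$ does imply that every commutator is constant, but the converse is not immediate, since $\rho_{\mathrm{comm}}$ is a \emph{sum} of two increment expressions that could in principle cancel to a constant without either summand being constant; so $2$-step nilpotency does not obviously force each limit cocycle to be (additive)$+$(constant). Second, and more seriously, the bridge from Gottschalk--Hedlund to the enveloping group is not built: unboundedness of the real-valued sums $\tilde f_n-n\int_0^1\tilde f$ does not by itself yield a subnet along which $\tilde f_{n_i}\bmod 1$ converges pointwise to something that fails to be affine --- reduction mod $1$ can erase the unboundedness --- and even granted such a limit one must still exhibit a concrete pair $p_1,p_2\in E(\T^2)$ whose commutator has genuinely non-constant fibre part. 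That passage is precisely the hard content of Glasner's theorem; as written, your argument assumes it rather than proves it, so the converse direction is not established.
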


\section{Proof of Theorem A}
In this section, the topological complexity of the dynamical system on $\T^2$ is
computed. We will show that  their topological complexity is low in some cases.
Firstly, we introduce some notations. Let $f$ be a real valued function on $[a, b]$,
$$\Delta: a=x_0<x_1<x_2<\cdots <x_n=b$$ be a partition, $$v_\Delta=\sum_{i=1}^{n}|f(x_i)-f(x_{i-1})|,$$
 and $$\bigvee_a^b(f)=\sup\{v_\Delta: \Delta \text{ is a partition over }[a,b]\}.$$ We say that $f$ is a
function with bounded variation if $\bigvee_a^b(f)<+\infty$.

It is well known that $f$ has a  bounded variation on $[a, b]$ if and only if $f(x)=g(x)-h(x) \text{ for all } x\in [a, b]$,
where $g(x)=\frac{1}{2}(\bigvee\limits_{a}^{x}(f)+f(x))$ and $h(x)= \frac{1}{2}(\bigvee\limits_{a}^{x}(f)-f(x))$ are increasing functions on $[a, b]$.

\begin{Lemma}\label{QP1}
Let $(\T^2, T)$ be a t.d.s. defined in (\ref{defineT}) such that  $f\in \mathcal{F}_l, l \neq 0\text{ and }\alpha \in \mathbb{R}\setminus\mathbb{Q}$.
If $f$ has a bounded variation on $[0,1]$, then
\begin{equation}\label{Q1}
s(n,\epsilon)\leq {20(\bigvee\limits_0^1(f)+1)n}/{\epsilon^2}
\end{equation}
for any $n\in \mathbb{N}$ and $\epsilon >0$  small enough.

\end{Lemma}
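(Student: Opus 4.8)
The plan is to bound the number of $(n,\epsilon)$-separated points directly by exploiting the skew-product structure of $T$. The key observation is that the dynamics in the first coordinate is just an isometry (rotation by $\alpha$), so the action of $T^i$ on the first coordinate contributes nothing complicated: $T^i(x,y) = (x+i\alpha, f_i(x)+y)$, where the cocycle is $f_i(x) = f(x) + f(x+\alpha) + \cdots + f(x+(i-1)\alpha)$. Thus two points $(x,y)$ and $(x',y')$ are $(n,\epsilon)$-close precisely when $\|x-x'\|$ is small \emph{and} $\|f_i(x)+y - f_i(x')-y'\| \le \epsilon$ for all $0 \le i \le n-1$. I would therefore partition $\T^1$ (the first coordinate) into roughly $1/\delta$ intervals of length $\delta$, and on each such piece count how many distinct $y$-values can be separated. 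The real work is controlling, on each small $x$-interval, how many times the second coordinate can be forced apart as $i$ ranges over $0,\dots,n-1$.

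\medskip
The main technical step is to relate separation in the $y$-coordinate to the bounded variation of $f$. First I would decompose $f = g - h$ with $g,h$ monotone on $[0,1]$ as in the preamble, so that the cocycle sums $f_i$ inherit a variation bound: since each summand $f(x+j\alpha)$ has total variation at most $\bigvee_0^1(f)$ (after accounting for the integer jump $l$ from the $\mathcal{F}_l$ condition), the partial sum structure lets me estimate how the values $f_i(x)$ spread out over a small $x$-interval. The crucial point is that on an interval of length $\delta$, the oscillation of each $f_i$ is controlled by the variation, and the number of genuinely different $(n,\epsilon)$-separated orbit-types in the $y$-direction grows only linearly in $n$ — the factor $n$ in the bound comes from the $n$ coordinates we must simultaneously keep close, while the variation $\bigvee_0^1(f)$ and the $1/\epsilon^2$ factor come from counting how finely we must subdivide both the $x$-axis (scale $\epsilon$) and the fibre $\T^1$ (scale $\epsilon$) and from the linear growth of total oscillation.

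\medskip
Concretely, I would choose $\delta \asymp \epsilon$ and partition $\T^1 \times \T^1$ into a grid: about $C/\epsilon$ columns in $x$, and in each column subdivide the $y$-fibre so that two points in the same cell stay $\epsilon$-close in all $n$ iterates. The count of $y$-cells needed per column is where the factor $n(\bigvee_0^1 f + 1)/\epsilon$ enters: within a fixed $x$-column the map $i \mapsto f_i(x)$ takes values whose spread across the column is governed by $n$ times the per-step variation, and dividing by the $\epsilon$-resolution of the fibre produces the stated estimate. Multiplying columns by cells-per-column and tracking the explicit constants (the monotone decomposition contributes the factor involving $\bigvee_0^1(f)+1$, and the two independent $\epsilon$-scales produce $\epsilon^2$) should yield the clean bound $20(\bigvee_0^1(f)+1)n/\epsilon^2$.

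\medskip
\textbf{The main obstacle} I expect is the bookkeeping that converts the pointwise variation bound on $f$ into a uniform count valid across \emph{all} $x$-columns simultaneously, because the cocycle $f_i$ is evaluated at the shifted points $x, x+\alpha, \dots, x+(i-1)\alpha$, which are equidistributed (by minimality / unique ergodicity of the rotation) rather than lying in a single interval. I would handle this by summing the variation contributions over the orbit segment using the fact that the variation is additive and each shifted copy of $f$ contributes at most $\bigvee_0^1(f)$ to the relevant oscillation; the equidistribution is not actually needed for an \emph{upper} bound, only that the total variation accumulated over $n$ steps is at most $n$ times the single-period variation plus the linear drift from $l$. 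Keeping the constant as small as $20$ will require care in the monotone decomposition and in choosing the grid, but this is routine once the variation accounting is set up correctly.
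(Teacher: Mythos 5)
Your overall strategy (a grid on $\T^2$, the Jordan decomposition $f=g-h$ into monotone pieces, and a variation count producing the factor $n\bigvee_0^1(f)$) is the same as the paper's, but the key counting step is assigned to the wrong coordinate, and as stated it fails. You propose $O(1/\epsilon)$ columns in $x$ and then to ``subdivide the $y$-fibre so that two points in the same cell stay $\epsilon$-close in all $n$ iterates,'' attributing the factor $n(\bigvee_0^1(f)+1)/\epsilon$ to the number of $y$-cells per column. No subdivision of the fibre can achieve this: if $x\neq x'$ lie in the same column and $\max_{0\le i\le n-1}\|f_i(x)-f_i(x')\|>\epsilon$, then $(x,y)$ and $(x',y)$ are $(n,\epsilon)$-separated even though they share the same $y$ and hence lie in the same cell of every fibre partition. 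This situation is unavoidable for large $n$: the total increase of $g_n+h_n$ over one period is $n\bigvee_0^1(f)$, so some column of width $\epsilon$ carries an increase of at least $n\epsilon\bigvee_0^1(f)\gg\epsilon$. The offset $y-y'$ is constant in $i$ while $f_i(x)-f_i(x')$ is not, so the $i$-dependence must be killed by refining in the $x$-direction, not in $y$.

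The paper does exactly this. Besides the uniform partition of $[0,1]$ into intervals of length at most $\epsilon/2$, it takes a second partition $0=t_0<\cdots<t_r=1$ adapted to the monotone sums $g_n=\sum_{i<n}g(\cdot+i\alpha)$ and $h_n=\sum_{i<n}h(\cdot+i\alpha)$, so that each increases by at most $\epsilon/4$ on every piece; since $g_n$ and $h_n$ increase over a period by $nM$ and $n(M-l)$ respectively, with $M=\tfrac12(\bigvee_0^1(f)+l)$, this costs $r\le[4Mn/\epsilon]+[4(M-l)n/\epsilon]+1$ pieces, and this is where the factor $n$ lives. The ingredient missing from your outline is the domination $|g_i(u_1)-g_i(u_2)|\le|g_n(u_1)-g_n(u_2)|$ for all $i\le n$, valid because $g$ is increasing so all summands $g(u_1+j\alpha)-g(u_2+j\alpha)$ have the same sign; this is precisely what lets a single partition adapted to the time-$n$ cocycle control all earlier times simultaneously. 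With that, the joined $x$-partition ($p\le 5(\bigvee_0^1(f)+1)n/\epsilon$ pieces) crossed with a plain $\epsilon/3$-partition of the fibre ($q\le 4/\epsilon$ pieces) gives cells in which any two points are $(n,\epsilon)$-close, and pigeonhole yields $s(n,\epsilon)\le pq$. Your worry about equidistribution of the orbit $x+j\alpha$ is indeed a red herring, but the actual mechanism --- monotone domination by the time-$n$ sums, implemented as an $x$-refinement --- needs to replace your fibre-subdivision step before the argument closes.
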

\begin{proof}
Clearly, $$T^n(x,y)=(x+n\alpha,f_n(x)+y),$$
where $f_n(x)=\sum\limits_{i=0}^{n-1}f(x+i\alpha)$.

Let  $g(x)=\frac{1}{2}(\bigvee\limits_{0}^{x}(f)+f(x))$ and  $h(x)= \frac{1}{2}(\bigvee\limits_{0}^{x}(f)-f(x))$. 
Then $g$ and $h$ are increasing functions on $[0, 1]$ satisfying 
$$f(x)=g(x)-h(x),$$
$$g(x+1)-g(x)= M,$$
 and 
 $$h(x+1)-h(x)=M-l$$
for any $x\in\mathbb{R}$. Take $M=\frac{1}{2}(\bigvee\limits_0^1(f)+l)$, then $M\geq0$ and $M-l\geq0$.

For $n\in \mathbb{N}$,
let $$g_n(x)=\sum\limits_{i=0}^{n-1}g(x+i\alpha) \ \text{ and }\
h_n(x)=\sum\limits_{i=0}^{n-1}h(x+i\alpha).$$
We can choose
a partition $$\Delta_1: 0=s_0<s_1<s_2<\ldots<s_m=1$$ such that
$s_{i+1}-s_{i} \leq {\epsilon}/{2} $ for $0\leq i\leq m-1$ with $m=[{2}/{\epsilon}]+1$,
and a partition  
$$\Delta_2: 0=t_0<t_1<
t_2<\ldots<t_r=1$$ such that 
$$g_n(t_{j+1})-g_n(t_j)\leq {\epsilon}/{4}$$ and
$$h_n(t_{j+1})-h_n(t_j)\leq {\epsilon}/{4}$$ for any $0\leq j\leq r-1$ 
 with $r\leq [{4Mn}/{\epsilon}]+[{4(M-l)n}/{\epsilon}]+1$.

By joining $\Delta_1$ with  $\Delta_2$, we  get a new partition $$0=x_0< x_1<x_2<\ldots < x_p=1$$ for $\epsilon>0$  small enough with $p\leq m+r\leq  {5(\bigvee\limits_0^1(f)+1)n} /{\epsilon}$.

We can also take a partition $$0=y_0<y_1<y_2<\ldots<y_q=1$$
such that $y_{i+1}-y_{i} < {\epsilon}/{3}$ for any $0\leq i\leq q-1$ and $\epsilon>0 $ small enough
 with  $q=[{3}/{\epsilon}]+1\leq {4}/{\epsilon}$.

 Now we show that $s(n,\epsilon)\leq pq\leq {20(\bigvee\limits_0^1(f)+1)n}/{\epsilon^2}$ for $\epsilon>0$ small enough. 
 Suppose this result dose not hold, then there exists an $(n ,\epsilon)$-separated set $E$ of $(\T^2, T)$ such that $|E|>{20(\bigvee\limits_0^1(f)+1)n}/{\epsilon^2}$.
  Since $|E|>pq$, there exist $0\leq i\leq p-1$ and $0\leq j\leq q-1$ such that there are at least two points 
    $$(u_1,v_1), (u_2,v_2)\in E \cap([x_i,x_{i+1}]\times [y_j,y_{j+1}])$$ with $(u_1,v_1)\neq(u_2,v_2)$. Thus,
   \begin{align*}
&d_n((u_1,v_1),(u_2,v_2))\\
=&\max_{0\leq i \leq n-1}d(T^i(u_1,v_1),T^i(u_2,v_2))\\
 =& \max_{0\leq i \leq {n-1}}\max\{||u_1-u_2||, ||g_i(u_1)-g_i(u_2)-h_i(u_1)+h_i(u_2)+v_1-v_2||\} \\
\leq&\max_{0\leq i \leq {n-1}} \max \{||u_1-u_2||, ||g_i(u_1)-g_i(u_2)||+||h_i(u_1)-h_i(u_2)|| +||v_1-v_2||\}\\
\leq&\max_{0\leq i \leq {n-1}}\{|u_1-u_2|, |g_i(u_1)-g_i(u_2)|+|h_i(u_1)-h_i(u_2)| +| v_1-v_2|\}\\
\leq&\max \{|u_1-u_2|, |g_n(u_1)-g_n(u_2)|+|h_n(u_1)-h_n(u_2)| +|v_1-v_2|\}\\
< &\epsilon.
\end{align*}
 This is a contradiction with the definition of the $(n ,\epsilon)$-separated set $E$.
 This shows that \eqref{Q1} holds.
\end{proof}

\begin{Lemma}\label{QP2}
Let $(\T^2, T)$ be a t.d.s. defined in (\ref{defineT}) such that  $f\in \mathcal{F}_l\text{ and } l \neq 0$.
Then
\begin{equation}\label{Q2}
s(n,\epsilon)\geq {n| l | }/{3(\epsilon+\eta (\epsilon))}  
\end{equation}
for  any $n \in \mathbb{N}$ and $\epsilon>0$  small enough, where $\eta(\epsilon)=\sup\limits_{|x-y| \leq \epsilon}|f(x)-f(y)| $.
\end{Lemma}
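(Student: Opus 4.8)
The plan is to establish a lower bound on the number of $(n,\epsilon)$-separated points by exhibiting a controlled family of points whose orbits spread out in the second ($y$) coordinate. The key structural fact is the one already computed in the previous lemma: $T^n(x,y)=(x+n\alpha, f_n(x)+y)$ with $f_n(x)=\sum_{i=0}^{n-1}f(x+i\alpha)$. Because $f\in\mathcal{F}_l$ means $f(x+1)-f(x)\equiv l$, the Birkhoff-type sum $f_n$ has a built-in linear growth of slope $l$ per unit horizontal length. Concretely, I would first observe that $f_n(x+1)-f_n(x)=nl$ for every $x$, so as $x$ traverses an interval of length $1$, the value of $f_n(x)$ sweeps through a range of length at least $n|l|$ (indeed exactly $n|l|$ in net change, and the intermediate value theorem then guarantees the image covers an interval of that length since $f_n$ is continuous).

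**Next I would** fix $x$ and vary $y$, or better, work along a horizontal segment and use this sweep to manufacture separated points. The natural construction is to pick points $(x_k,0)$ with $x_k$ spaced so that consecutive values $f_n(x_{k})$ differ by more than the separation threshold, while simultaneously controlling that the points are not separated \emph{too early} (i.e. controlling the horizontal coordinate and the intermediate iterates). The role of $\eta(\epsilon)=\sup_{|x-y|\le\epsilon}|f(x)-f(y)|$ is precisely to quantify the modulus of continuity of $f$: if two horizontal coordinates are within $\epsilon$, then at each single step the $y$-displacement differs by at most $\eta(\epsilon)$, so over the whole block the accumulated divergence is controlled in terms of $\epsilon+\eta(\epsilon)$. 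I would use this to argue that points whose $x$-coordinates are close behave like a bundle in which the final coordinate $f_n$ is the dominant source of separation, and that one needs roughly $n|l|/(\epsilon+\eta(\epsilon))$ points to tile the swept interval of length $n|l|$ with gaps of size about $\epsilon+\eta(\epsilon)$ while keeping each pair genuinely $(n,\epsilon)$-separated.

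**Concretely**, I would select a maximal set of points on a suitable horizontal line (or a short family in $x$) such that their images under $f_n$ are $(\epsilon+\eta(\epsilon))$-separated in the circle metric $\|\cdot\|$; since the image interval has length $n|l|$, at least on the order of $n|l|/(\epsilon+\eta(\epsilon))$ such points exist, and the factor $1/3$ absorbs the circle-wrapping and the max-metric overhead. For each such pair I would verify $d_n\big((u_1,v_1),(u_2,v_2)\big)>\epsilon$ by noting that $d_n$ is a max over $0\le i\le n-1$, and in particular the $i=n{-}1$ (or an intermediate optimal) term already exceeds $\epsilon$ once the horizontal gap is small enough that $\eta$-type fluctuations cannot cancel the guaranteed net spread in $f_n$. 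This gives $s(n,\epsilon)\ge n|l|/\big(3(\epsilon+\eta(\epsilon))\big)$.

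**The main obstacle** I anticipate is the bookkeeping that simultaneously guarantees \emph{separation at the final time} and \emph{non-premature separation}: one must ensure the chosen points really form an $(n,\epsilon)$-separated set, which requires the spread in $f_n$ to exceed $\epsilon$, yet the estimate must be phrased so that the denominator is $\epsilon+\eta(\epsilon)$ rather than just $\epsilon$. The cleanest route is to choose the $x$-coordinates all equal (or within a single $\eta$-controlled cluster) and distinguish points only through the $y$-coordinate combined with the $f_n$-sweep, so that at the last iterate the vertical separation is forced; the $\eta(\epsilon)$ term then enters as the unavoidable slack from comparing $f_i$ at nearby arguments across all intermediate times $i<n$. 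Getting the constant $1/3$ and the small-$\epsilon$ requirement to fall out correctly from the circle metric is the delicate but routine part.
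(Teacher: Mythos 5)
Your construction is the right one and matches the paper's: place points $(x_i,0)$ on a horizontal segment so that consecutive values of $f_n$ differ by exactly $\epsilon+\eta(\epsilon)$; since $f_n(1)-f_n(0)=nl$, one of the two half-intervals carries a net increase of at least $n|l|/2$, and by continuity this yields more than $n|l|/\bigl(3(\epsilon+\eta(\epsilon))\bigr)$ such points (the $1/3$ is just slack from the halving and the floor, not from circle-wrapping). The genuine gap is in the verification that this set is $(n,\epsilon)$-separated. What must be bounded below is the \emph{circle} norm $\|f_m(x_i)-f_m(x_j)\|$ for some $0\le m\le n-1$, and at late times the real difference $f_m(x_j)-f_m(x_i)$ is typically a large real number whose reduction mod $1$ may be far smaller than $\epsilon$ --- no constant factor ``absorbs'' this. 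The step that actually closes the argument, and which your proposal only gestures at, is a first-crossing-time argument: when $|x_i-x_j|\le\epsilon$, each iterate changes $f_m(x_j)-f_m(x_i)$ by at most $\eta(\epsilon)$, so at the \emph{first} time $m$ with $f_m(x_j)-f_m(x_i)>\epsilon$ one has $\epsilon< f_m(x_j)-f_m(x_i)\le \epsilon+\eta(\epsilon)<1/3$, and only at such a time does the circle norm coincide with the real difference and exceed $\epsilon$. This intermediate-time step is the heart of the lemma. Relatedly, your stated worry about ``non-premature separation'' is a red herring: early separation is harmless for a separated set; the danger is wrap-around at the time you choose to witness separation.

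A second, smaller but real error: your proposed ``cleanest route'' of taking all $x$-coordinates equal and distinguishing points only through the $y$-coordinate cannot work. If $x$ is fixed, $T^m$ translates every point of the fiber by the same amount $f_m(x)$, so vertical distances are preserved for all time; this produces only on the order of $1/\epsilon$ separated points and no factor of $n$. The linear-in-$n$ lower bound is generated precisely by varying $x$ and exploiting the divergence of the cocycle sums $f_n$ at nearby base points.
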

\begin{proof}
Clearly,  $$T^n(x,y)=(x+n\alpha,f_n(x)+y),$$
where $f_n(x)=\sum\limits_{i=0}^{n-1}f(x+i\alpha)$.
Since  $f_n(1)-f_n(0)=nl$, one has either $$f_n({1}/{2})-f_n(0)\geq {nl}/{2}$$ or $$f_n(1)-f_n({1}/{2})\geq {nl}/{2}.$$
Without loss of generality, we suppose $f_n({1}/{2})-f_n(0)\geq {nl}/{2}$. For the other case, the argument is similar.
Since $f$ is continuous and $f(x+1)-f(x)=l$, it is not hard to check that $\epsilon
\searrow 0$ implies $\eta(\epsilon)\searrow 0$. Take $\epsilon_0>0$ such that $\epsilon_0+\eta(\epsilon_0)<{1}/{3}$.
We can find a sequence $$0=x_0<x_1<x_2\ldots <x_k\leq {1}/{2}$$  such that $$f_n(x_{i+1})-f_n(x_i)= \eta(\epsilon)+\epsilon$$ 
 for any $0\leq i \leq k-1$ with $k> {n|l|}/{3(\eta(\epsilon)+\epsilon)}$ and $\epsilon_0\geq \epsilon>0$.

 Fix $\epsilon\in (0, \epsilon_0]$. 
To show \eqref{Q2} 
it suffices to show that $\{(x_i,0)| 1\leq i \leq k\}$ is an $(n,\epsilon)$-separated set of $(\T^2, T)$.
In fact, for any $1\leq i<j\leq k$, if $x_j-x_i>\epsilon$, then $ ||x_i-x_j||>\epsilon$ which implies that  $d_n((x_i,0),(x_j,0))>\epsilon$.
Otherwise,  by the definition of $\eta(\epsilon)$, we have
$$-\eta(\epsilon)\leq f_1(x_j)-f_1(x_i)\leq \eta(\epsilon),$$
and $$f_{m}(x_j)-f_{m}(x_i)\in [f_{m-1}(x_j)-f_{m-1}(x_i)-\eta(\epsilon),  f_{m-1}(x_j)-f_{m-1}(x_i)+\eta(\epsilon)]$$
for any $2\leq m \leq n$.

Since $f_n(x_j)-f_n(x_i)\geq\eta(\epsilon)+\epsilon$, 
we can define  $$l:=\min\{ 1\leq k \leq n : f_k(x_j)-f_k(x_i)>\epsilon\}.$$  This means $$\epsilon< f_l(x_j)-f_l(x_i) \leq \eta(\epsilon)+\epsilon.$$

Thus,
$$d_n((x_i,0),(x_j,0))\geq d_l((x_i,0),(x_j,0))\geq ||f_l(x_i)-f_l(x_j)|| > \epsilon.$$
Summarizing up, we always have $$d_n((x_i,0),(x_j,0))> \epsilon$$  for any $1\leq i<j\leq k$. This completes the proof.
\end{proof}
Now we turn to prove Theorem A.

\begin{proof}[Proof of Theorem A]
Since $r(n, \epsilon)\leq s(n, \epsilon)\leq r(n, {\epsilon}/{2})$ for any $\epsilon>0$ and $n\in \mathbb{N}$, we have
$$ {n| l | }/{3(2\epsilon+\eta (2\epsilon))} \leq r(n, \epsilon)\leq {20(\bigvee\limits_0^1(f)+1)n}/{\epsilon^2}$$ for $\epsilon>0$ small enough, 
where $\eta(\epsilon)$  comes from Lemma  \ref{QP2}. 
Take $$c_1(\epsilon)= {| l | }/{3(2\epsilon+\eta (2\epsilon))} \; \text{and}\; c_2(\epsilon)={20(\bigvee\limits_0^1(f)+1)}/{\epsilon^2},$$ we get the result. 
\end{proof}

We now  translate Theorem A into the language of topological complexity using open covers. Let $\mathcal{U}$ be an open cover of $X$, and for every integer $n\in \mathbb{N}$,  $\mathcal{N}(\mathcal{U}, n)$ to be the minimal cardinality of a subcover of $\bigvee_{j=0}^{n-1}T^{-j}\mathcal{U}$. The complexity function of $\mathcal{U}$ is the map $n\mapsto \mathcal{N}(\mathcal{U}, n)$.

 We know that $r(n, \epsilon)\leq c(\epsilon)n$ for every $\epsilon>0$ is equivalent to 
 $\mathcal{N}(\mathcal{U},n)\leq C(\mathcal{U})n$ for every open cover $\mathcal{U}$ of $X$; 
 $r(n, \epsilon)\geq c(\epsilon)n$ for every $\epsilon>0$ is equivalent to 
 $\mathcal{N}(\mathcal{U},n)\geq C(\mathcal{U})n$ for every open cover $\mathcal{U}$ of $X$(see \cite{HKM} for details).

\section{Minimality and the maximal equicontinuous factor}

In \cite [Chapter 5] {JA}  the author presented a criterion for the minimality of a class of group extensions
of minimal systems, and applied the criterion to show the minimality of a class of skew products on
$\T^{k+1}$, the $(k+1)$ torus, namely
$$T(z, w_1, w_2, \cdots, w_k)=(\alpha z, \varphi(z)w_1, \varphi(\beta z)w_2, \cdots, \varphi(\beta^{k-1}z)w_k),$$
where $\alpha ,\beta\in \mathbb T^1 $ and $\varphi$ is chosen appropriately.
In this section, we will give another way to prove that when $f \in \mathcal{F}_l(l\neq 0)$,
the system $(\T^2, T)$ defined as before is minimal and the regionally proximal relation
$$Q(\T^2, T)=\{((x, y_1),(x, y_2)): x, y_1, y_2 \in \T^1 \}.$$ For this purpose, the following result is needed and very useful.

\begin {Lemma}\label{Q20}
Let $f\in \mathcal{F}_0$. Then there exist $x_1, x_2 \in \T^1$ such that
\begin{equation}\label{e1}
\sup\limits_{n\geq 1}( f_n(x_1)-n\int_0^1f(x)dx)\leq 2
\end{equation}
and
\begin{equation}\label{e2}
\inf\limits_{n\geq 1}( f_n(x_2)-n\int_0^1f(x)dx)\geq -2,
\end{equation}
where $f_n(x)=\sum\limits_{i=0}^{n-1}f(x+i\alpha)$.
Moreover, the sets $$A=\{x \in \T^1:  \text{there esixts}\ M_1(x)\in \mathbb{R} \text{ such that } \sup\limits_{n\geq1}(g_n(x_1)-n\int_0^1g(x)dx)\leq M_1(x)\}$$
and $$B=\{x\in \T^1:  \text{ there exists } M_2(x)\in \mathbb{R} \text{ such that }  \inf\limits_{n\geq1}(g_n(x_2)-n\int_0^1g(x)dx)\geq M_2(x)\}$$ 
are dense in $\T^1$.
 \end{Lemma}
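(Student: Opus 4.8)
The plan is to reduce to the zero-mean case and then produce a single point whose forward Birkhoff sums never become positive; the claimed bound $2$ is then comfortable slack. I set $F=f-\int_0^1 f$, so that $F\in\mathcal{F}_0$, $\int_0^1 F=0$, and $f_n(x)-n\int_0^1 f=F_n(x)$ with $F_n(x)=\sum_{i=0}^{n-1}F(x+i\alpha)$. Inequality \eqref{e1} will follow from a point $x_1$ with $F_n(x_1)\le 0$ for all $n\ge1$, and \eqref{e2} from the same argument applied to $-f$ (equivalently $-F$), giving $x_2$ with $F_n(x_2)\ge 0$ for all $n$. The only analytic input I use is that rotation by $\alpha$ is uniquely ergodic with Haar measure, so by Lemma \ref{R}(2), $\tfrac1n F_n(x)\to\int_0^1 F=0$ for every $x\in\T^1$.

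The heart of the matter is the uniform (in $N$) estimate $\min_{x\in\T^1}\max_{1\le n\le N}F_n(x)\le 0$. I argue by contradiction: if the continuous function $x\mapsto\max_{1\le n\le N}F_n(x)$ had minimum $\delta>0$, then every $x$ would admit some $n\in\{1,\dots,N\}$ with $F_n(x)\ge\delta$. Starting from any $x_0$ and using the cocycle identity $F_{a+b}(x)=F_a(x)+F_b(x+a\alpha)$, I build indices $0=m_0<m_1<\cdots$ with each gap $m_k-m_{k-1}\in\{1,\dots,N\}$ and each increment $F_{m_k-m_{k-1}}(x_0+m_{k-1}\alpha)\ge\delta$, whence $F_{m_k}(x_0)\ge k\delta$ while $m_k\le kN$. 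Then $F_{m_k}(x_0)/m_k\ge\delta/N>0$ for all $k$, contradicting $\tfrac1n F_n(x_0)\to0$ along the subsequence $m_k\to\infty$. Hence for each $N$ the closed set $A_N=\{x:F_n(x)\le0,\ 1\le n\le N\}$ is nonempty. Since $A_{N+1}\subseteq A_N$ and $\T^1$ is compact, $\bigcap_N A_N\ne\emptyset$, and any $x_1$ in this intersection satisfies $F_n(x_1)\le0\le2$ for all $n\ge1$, proving \eqref{e1}; \eqref{e2} is symmetric.

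For the ``moreover'' assertion I propagate boundedness along the forward orbit of $x_1$ (resp. $x_2$). For $k\ge0$ the cocycle identity gives $f_n(x_1+k\alpha)-n\int_0^1 f=F_{n+k}(x_1)-F_k(x_1)\le 2-F_k(x_1)$, a finite quantity independent of $n$; thus every point of $\{x_1+k\alpha:k\ge0\}$ lies in $A$, with witness $M_1=2-F_k(x_1)$, and symmetrically for $B$ along the orbit of $x_2$. Since $\alpha$ is irrational these forward orbits are dense in $\T^1$, so $A$ and $B$ are dense. The main obstacle is the uniform-in-$N$ estimate of the previous paragraph: the delicate point is that the renewal construction must force genuinely \emph{linear} growth of the Birkhoff sums, which is exactly what the mean-zero ergodic limit $\tfrac1n F_n(x_0)\to0$ forbids; once this is in place, the compactness and orbit-density steps are routine.
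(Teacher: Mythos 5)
Your proof is correct and follows essentially the same route as the paper's: a renewal/concatenation argument via the cocycle identity, with unique ergodicity of the rotation forbidding linear growth of the mean-zero Birkhoff sums, followed by propagating the bound along the dense forward orbit for the ``moreover'' part. The only differences are organizational --- you deploy compactness through the nested closed sets $A_N$ and an attained minimax rather than through a finite open subcover, which incidentally yields the sharper bound $0$ in place of $2$.
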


\begin{proof}
Firstly, we prove \eqref{e1}. Suppose \eqref{e1} dose not hold, then for any $y\in \T^1$, there exists $n_y\geq 1$ such that
$$f_{n_y}(y)-n_y \int_0^1 f(x)dx>2.$$
By the continuity of $f$, there exists an open neighborhood $U_y$  of $y$ such that for any $y'\in U_y$
$$f_{n_y}(y')-n_y \int_0^1f(x)dx>2.$$
Since $\{U_y: y\in \T^1\}$ is an open cover of $\T^1$, there exists $\{y_1, y_2, \cdots, y_l\} \subset \T^1$ such that $\cup_{i=1}^l U_{y_i}=\T^1.$
For $y_0 \in \T^1$,  we define $\{s_i\} \subset \mathbb{N}$  and $\{k_i\} \subset \{1, 2, \cdots, l\}$,  by induction, that $s_0=0, y_0+s_i\alpha \in U_{y_{k_i}}$ and $s_{i+1}=s_i+n_{y_{k_i}}$, then we claim that
\begin{equation}\label{e10}
f_{s_i}(y_0)-s_i\int _0^1f(x)dx\geq 2i \text{ for any } i \geq 1.
\end{equation}

In fact, for $i=1$, it is clear, and if we assume it is true for $i=p$, then it also holds for $i=p+1$ because 
\begin{align*}
f_{s_{p+1}}(y_0)- s_{p+1} \int_0^1f(x)dx=&f_{s_{p}}(y_0)+f_{n_{y_{k_{p}}}}(y_0+s_{p} \alpha)-(s_{p}+n_{y_{k_{p}}})\int_0^1f(x)dx\\
=&(f_{n_{y_{k_p}}}(y_0+s_p \alpha)-n_{y_{k_p}} \int_0^1f(x)dx)+(f_{s_p}(y_0)-s_p \int _0^1f(x)dx)\\
\geq& 2+(f_{s_p}(y_0)-s_p \int _0^1f(x)dx) \; \;\text{(by the definition of $\{s_i\}$)}\\
\geq&2(p+1)  \;\;\text{(by the induction assumption)}.
\end{align*}
Thus, by induction, \eqref{e10} holds.

Let $M=\max\{ n_{y_1}, n_{y_2}, \cdots, n_{y_l}\}$, then $i \leq s_i \leq Mi$ for any $i \geq 1$.
On one hand, we have
\begin{equation}\label{M4}
\limsup_{i \rightarrow +\infty} \frac{f_{s_i}(y_0)}{s_i}
\geq \limsup_{i \rightarrow +\infty}\frac{s_i\int_0^1f(x)dx+2i}{s_i}
\geq\int_0^1f(x)dx+\frac{2}{M}.
\end{equation}
On the other hand, since $\tau: \T^1 \rightarrow \T^1, x\mapsto x+\alpha$ is uniquely ergodic, by Lemma \ref{R}, we have
$$\lim_{n\rightarrow +\infty}\frac{f_n(y_0)}{n}=\int_0^1f(x)dx,$$
a contradiction with \eqref{M4}. This implies that \eqref{e1} holds. 

Now we show that the set $A$ is dense in $\T^1$.  For any $m\in \mathbb{N}$,
\begin{align*}
&\sup_{n\geq1}[g_n(x_1+(m+1)\alpha)-n\int_0^1g(x)dx]\\
=&\sup_{n\geq1}[g_{n+1}(x_1+m\alpha)-g(x_1+m\alpha)-n\int_0^1g(x)dx]\\
=&\sup_{n\geq1}[g_{n+1}(x_1+m\alpha)-(n+1)\int_0^1g(x)dx]+[\int_0^1g(x)dx-g(x_1+m\alpha)].
\end{align*}
So we have $\{x_1+m\alpha:  m\in \mathbb{N}\}\subset A$, hence $A$ is dense in $\T^1$.
By a similar argument, we obtain that \eqref{e2} holds and the set $B$ is dense in $\T^1$.
\end{proof}

\begin{Lemma}\label{Q3}
Let $(\T^2,T)$ be a t.d.s. defined in (\ref{defineT}) such that $f\in \mathcal{F}_l$, $l\neq 0$ and $\alpha \in \mathbb{R}\setminus\mathbb{Q}$. Then $(\T^2, T)$ is minimal.
\end{Lemma}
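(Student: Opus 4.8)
The plan is to use Lemma \ref{Q15}, which reduces minimality of the distal system $(\T^2, T)$ to exhibiting a single transitive point. First I would verify that $(\T^2, T)$ is indeed distal: since $T(x,y) = (x+\alpha, f(x)+y)$, the action on the second coordinate is an isometry (a translation depending only on $x$), so two points sharing the same first coordinate stay at constant distance, while two points with different first coordinates stay separated by the irrational rotation on $\T^1$; hence no proximal pairs exist. Once distality is established, by Lemma \ref{Q15} it suffices to produce one point whose forward orbit is dense in $\T^2$.

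To find a transitive point I would exploit the structure of the extension over the minimal (indeed uniquely ergodic) rotation $\tau$. The orbit closure $\overline{\orb((x_0,y_0),T)}$ projects onto all of $\T^1$ under $\pi$, so it is a closed $T$-invariant set meeting every fiber $\{x\}\times \T^1$. The key point is that each fiber intersection is invariant under the group of vertical translations that preserve the orbit closure; standard group-extension arguments show the fiber sections are cosets of a closed subgroup $H \leq \T^1$, and $H$ is either finite or all of $\T^1$. Minimality amounts to ruling out $H$ finite. The decisive input here is that $l \neq 0$: the ``winding number'' $f_n(1) - f_n(0) = nl$ forces the vertical displacement to be unbounded, which is incompatible with the orbit closure being contained in a finite union of horizontal circles (a finite subgroup $H$ would make the system a finite-to-one extension with bounded cocycle behavior, contradicting the linear growth coming from $l \neq 0$).

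The main obstacle, and where I expect Lemma \ref{Q20} to do the real work, is converting the qualitative statement ``$l \neq 0$ forces unbounded winding'' into a proof that $H = \T^1$. The natural strategy is to decompose: writing $f(x) = lx + \tilde f(x)$ where the linear part carries the winding and $\tilde f \in \mathcal{F}_0$ is the mean-zero part, one controls the fluctuations $f_n(x) - n\int_0^1 f$ using the sup/inf bounds \eqref{e1}, \eqref{e2} and the density of the sets $A$ and $B$. These bounds say the Birkhoff sums of the $\mathcal{F}_0$-part do not drift to $\pm\infty$ along suitable dense sets of starting points, so the vertical coordinate visits a dense set of values; combined with minimality of $\tau$ on the base, this pins the fiber subgroup $H$ to be dense, hence (being closed) equal to $\T^1$. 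I would therefore structure the argument as: (i) reduce to the mean-zero cocycle via $f(x)=lx+\tilde f(x)$; (ii) apply Lemma \ref{Q20} to the relevant pieces to control vertical drift; (iii) conclude the fiber group is all of $\T^1$ and invoke Lemma \ref{Q15}. The delicate step is (ii)—the precise way the boundedness of fluctuations on a dense set interacts with the linear growth to force topological transitivity of the vertical action.
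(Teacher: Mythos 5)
Your reduction to transitivity via distality and Lemma \ref{Q15}, and your decomposition $f(x)=lx+\tilde f(x)$ with $\tilde f\in\mathcal{F}_0$, match the paper. But the decisive step is left open in both of the routes you sketch. In the structural route, the assertion that a finite fiber group $H$ contradicts ``linear growth of the vertical displacement'' is not a proof: if $H$ had order $k$, the orbit closure would produce a continuous $\Psi:\T^1\to\T^1$ with $kf(x)=\Psi(x+\alpha)-\Psi(x)+c$, and the actual obstruction is a winding-number computation (a lift of $\Psi$ satisfies $\tilde\Psi(x+1)=\tilde\Psi(x)+d$, so the periodicity of both sides forces $kl=0$), not unboundedness of $f_n$ --- indeed $f_n$ is unbounded even for minimal examples such as $f(x)=lx$, so unbounded vertical displacement cannot by itself be the contradiction. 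Carried out properly, this degree argument is a clean alternative proof that bypasses Lemma \ref{Q20} entirely, but as written it is only an intuition.

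In the analytic route --- the one the paper actually takes --- the missing idea is \emph{how} Lemma \ref{Q20} is deployed. The paper applies it at \emph{two} points $x',x''$ lying in the same interval of length at most $\delta$, chosen from the dense sets $B$ and $A$ respectively, so that the Birkhoff sums of the mean-zero part are bounded below at $x'$ and bounded above at $x''$. Then (for $l>0$)
$$f_n(x')-f_n(x'')=g_n(x')-g_n(x'')+nl(x'-x'')\ \ge\ M_1-M_2+nl\delta/2\ \longrightarrow\ +\infty,$$
and once this difference exceeds $1$ the intermediate value theorem forces the continuous map $x\mapsto f_n(x)+y_1 \pmod 1$ to take \emph{every} value of $\T^1$ on the interval $[x'',x']$; combined with the density of $\{n\alpha\}$ in the base, this places a point of $U_1\times V_1$ into $T^{-n}(U_2\times V_2)$. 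Your sketch instead speaks of the vertical coordinate ``visiting a dense set of values'' and of ``pinning the fiber subgroup to be dense,'' but neither follows from the one-sided, one-point fluctuation bounds as you state them (Lemma \ref{Q20} does not say the sums fail to drift to $\pm\infty$; it bounds them on one side only at each point). The two-point comparison with opposite-signed bounds, followed by the intermediate value theorem, is the concrete step your argument is missing.
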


\begin{proof}
It is clear that $(\T^2, T)$ is distal. To show $(\T^2, T)$ is minimal, by Lemma \ref{Q15},  it suffices to show that  $(\T^2, T)$ is transitive.
Consider non-empty open sets $U_1\times V_1$ and $U_2\times V_2$ of $\T^2$, there exist $x_1, x_2, y_1, y_2\in (0, 1)$ and $\delta>0$ such that
 $$(x_1-\delta, x_1+\delta)\times (y_1-\delta, y_1+\delta)\subset U_1\times V_1$$
 and
 $$(x_2-\delta, x_2+\delta)\times (y_2-\delta, y_2+\delta)\subset U_2\times V_2.$$
Let $f(x)=g(x)+lx$. Since $f\in \mathcal{F}_l$,  we have $g \in \mathcal{F}_0$.

In the following, we divide the proof into two parts.

\noindent\textbf{Case 1}. $l>0$. By Lemma \ref{Q20}, there exist $x'\in(x_1+{\delta}/{4}, x_1+{\delta}/{2}), \;x''\in(x_1-{\delta}/{2}, x_1-{\delta}/{4})$, 
and $M_1,\;M_2\in \mathbb{R}$ such that $$\inf\limits_{n\geq1}(g_n(x')-n\int_0^1g(x)dx)\geq M_1$$ 
and $$\sup\limits_{n\geq1}(g_n(x'')-n\int_0^1g(x)dx)\leq M_2.$$ Thus,
\begin{align}\label{q4}
f_n(x')-f_n(x'')&=g_n(x')-g_n(x'')+nl(x'-x'')\nonumber\\
&\geq  M_1-M_2+{nl\delta}/{2} \rightarrow +\infty
\end{align}
as $n\rightarrow +\infty$.
Since $\{n\alpha| n\in\mathbb{Z^+}\}$ is dense in $\T^1$, there are infinitely many $n_i\in \mathbb{N}$ such that
 $$(n_i\alpha+x_1-{\delta}/{2}, n_i\alpha+x_1+{\delta}/{2})\subset (x_2-\delta, x_2+\delta ).$$ 
  Therefore, for any $x\in (x_1-{\delta}/{2}, x_1+{\delta}/{2})$, we have $x+n_i\alpha\in (x_2-\delta, x_2+\delta)$.
  By \eqref{q4}, there exists $N_1\in \{n_i\}$ such that $$f_{N_1}(x')-f_{N_1}(x'')>1.$$ By the continuity of $f$,
  there exists $x_0\in (x'', x')\subset (x_1-{\delta}/{2}, x_1+{\delta}/{2})$ such that $$f_{N_1}(x_0)+y_1\in
  (y_2-\delta,y_2+\delta) \text { and } x_0+N_1\alpha \in (x_2-\delta,x_2+\delta),$$ i.e.
   $$(x_0, y_1)\in (x_1-\delta, x_1+\delta) \times (y_1-\delta, y_1+\delta)\subset U_1\times V_1$$ and 
   $$T^{N_1}(x_0, y_1)\in (x_2-\delta, x_2+\delta)\times (y_2-\delta, y_2+\delta)\subset U_2\times V_2.$$
  Therefore,  $$(x_0, y_1)\in (U_1\times V_1)\cap T^{-N_1}(U_2\times V_2).$$ 
 Since $U_1, U_2, V_1$ and $V_2$ are arbitrary, $(\T^2,T)$ is transitive.

\noindent\textbf{Case 2}. $l<0$. By Lemma \ref{Q20}, there exist $z'\in(x_1+{\delta}/{4}, x_1+{\delta}/{2}),\;
z''\in(x_1-{\delta}/{2}, x_1-{\delta}/{4})$, and $K_1, K_2\in\mathbb{R}$ such
that $$\sup\limits_{n\geq1}(g_n(z')-n\int_0^1g(x)dx)\leq K_1$$ and
$$\inf\limits_{n\geq1}(g_n(z'')-n\int_0^1g(x)dx)\geq K_2.$$ 
Thus, 
\begin{align}\label{q51}
f_n(z')-f_n(z'')&=g_n(z')-g_n(z'')+nl(z'-z'')\nonumber\\
&\leq  K_1-K_2+nl\delta \rightarrow -\infty
\end{align}
as $n\rightarrow +\infty$.
Since $\{n\alpha| n\in\mathbb{Z}^+\}$ is dense in $\T^1$, there are infinitely many $m_i\in \mathbb{N}$ such that
$$(m_i\alpha+x_1-{\delta}/{2}, m_i\alpha+x_1+{\delta}/{2})\subset (x_2-\delta, x_2+\delta ).$$
 Therefore, for any $x\in (x_1-{\delta}/{2}, x_1+{\delta}/{2})$, we have
$x+m_i\alpha\in (x_2-\delta, x_2+\delta)$. By \eqref{q51}, there exists $N_2\in \{m_i\}$ such that
$$f_{N_2}(x')-f_{N_2}(x'')<-1.$$ 
By the continuity of $f$, there exists $z_0\in (z'', z')\subset
(x_1-{\delta}/{2}, x_1+{\delta}/{2})$ such that
 $$f_{N_2}(z_0)+y_1\in (y_2-\delta,y_2+\delta)\text{ and } z_0+N_2\alpha \in (x_2-\delta,x_2+\delta),$$
  i.e. 
$$(z_0, y_1)\in (x_1-\delta, x_1+\delta)\times(y_1-\delta, y_1+\delta)\subset U_1\times V_1$$ 
and 
$$T^{N_2}(z_0, y_1)\in (x_2-\delta, x_2+\delta)\times(y_2-\delta, y_2+\delta)\subset U_2\times V_2.$$
Therefore, 
 $$(z_0, y_1)\in (U_1\times V_1)\cap T^{-N_2}(U_2\times V_2).$$ 
Since $U_1, U_2, V_1$ and $V_2$ are arbitrary, $(\T^2,T)$ is transitive.
Summarizing up, we complete the proof.
 \end{proof}

\begin{Lemma}\label{Q4}
 Let $(\T^2,T)$ be a t.d.s. defined in (\ref{defineT}) such that  $f\in \mathcal{F}_l$, $l\not=0$ and $\alpha \in \mathbb{R}\setminus\mathbb{Q}$. 
 Then $$Q(\T^2, T)=\{((x, y_1),(x, y_2)):  x, y_1, y_2 \in \T^1\}.$$
\end{Lemma}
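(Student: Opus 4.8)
The goal is to identify the regionally proximal relation $Q(\T^2,T)$ for the group extension defined by $T(x,y)=(x+\alpha,f(x)+y)$ with $f\in\mathcal F_l$, $l\neq 0$. Let me denote $P=\{((x,y_1),(x,y_2)):x,y_1,y_2\in\T^1\}$, the fibre relation of the projection $\pi$ onto the first coordinate. The proof should establish the two inclusions $Q(\T^2,T)\subset P$ and $P\subset Q(\T^2,T)$ separately.

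The plan is as follows.

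\medskip
\noindent\textbf{The inclusion $Q(\T^2,T)\subset P$.} First I would show that any regionally proximal pair must lie in the same fibre. The projection $\pi:(\T^2,T)\to(\T^1,\tau)$ onto the first coordinate is a factor map onto the irrational rotation, which is equicontinuous. Since $\pi$ is a homomorphism of t.d.s., it carries regionally proximal pairs to regionally proximal pairs, so $(\pi\times\pi)\big(Q(\T^2,T)\big)\subset Q(\T^1,\tau)$. But an equicontinuous system has trivial regionally proximal relation, i.e. $Q(\T^1,\tau)=\Delta_{\T^1}$. Hence if $((x_1,y_1),(x_2,y_2))\in Q(\T^2,T)$ then $x_1=x_2$, which is exactly the statement that this pair lies in $P$. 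Concretely one invokes the characterization of $Q$ given right after Lemma \ref{lm1}: equicontinuity of the factor forces $Q$ into the relation $R_\pi$, and here $R_\pi=P$.

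\medskip
\noindent\textbf{The inclusion $P\subset Q(\T^2,T)$.} This is the substantive direction and I expect it to be the main obstacle. I need to show that for any $x$ and any $y_1,y_2\in\T^1$, the pair $((x,y_1),(x,y_2))$ is regionally proximal; equivalently, for every $\ep>0$ and every pair of neighbourhoods of the two points, there are nearby points whose orbits come within $\ep$ at some time $n$. The natural strategy is to exploit that the fibre maps are translations: starting from two points $(x,y_1)$ and $(x,y_2)$ on the same fibre, after applying $T^n$ the first coordinates remain equal (both become $x+n\alpha$) and the difference in the second coordinate is the \emph{constant} $y_1-y_2$, independent of $n$. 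So these two exact points are \emph{not} themselves proximal; the regional proximality must come from perturbing the starting points. The idea is to move one starting point slightly along its own fibre and also perturb the first coordinate: choosing $x'$ near $x$, the fibre displacement after time $n$ becomes $f_n(x')-f_n(x)+(y_1-y_2)$, and I want to arrange this to be within $\ep$ of $0\pmod 1$ for some large $n$. The key input is that $f_n$ has large oscillation: because $l\neq 0$, Lemma \ref{QP2} and the estimates in the minimality proof (Lemma \ref{Q3}) show that $f_n(x')-f_n(x'')\to\pm\infty$ as $n\to\infty$ for suitably chosen $x',x''$ near $x$ (one on each side), using the boundedness results of Lemma \ref{Q20}. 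Since this difference tends to infinity continuously and the values $f_n$ take on a small interval sweep across an interval of length tending to infinity, by the intermediate value theorem and reduction mod $1$ one can hit any prescribed residue; in particular one can realize $y_1-y_2\pmod 1$ to within $\ep$. This mirrors exactly the density argument used in the minimality proof, where continuity of $f$ together with $f_{N}(x')-f_{N}(x'')>1$ produced an intermediate point landing in a target fibre interval.

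\medskip
\noindent\textbf{Execution and the main difficulty.} Concretely, fix $x,y_1,y_2$ and $\ep>0$. Using Lemma \ref{Q20} I select $x',x''$ within $\ep$ of $x$, one on each side, with $g_n(x')-n\int_0^1 g$ bounded below and $g_n(x'')-n\int_0^1 g$ bounded above (writing $f(x)=g(x)+lx$, $g\in\mathcal F_0$). Then $f_n(x')-f_n(x'')\ge M_1-M_2+n|l|\cdot|x'-x''|/2\to+\infty$ (and unboundedly along arbitrarily large $n$ lying in a prescribed residue set, since $\{n\alpha\}$ is dense we can also keep $x+n\alpha$ controlled). Because $x\mapsto f_n(x)$ is continuous and the increment $f_n(x')-f_n(x'')$ eventually exceeds $1$, for a suitable large $N$ there is an intermediate point $x_0\in(x'',x')$ with $f_N(x_0)\equiv (y_2-y_1)\pmod 1$ to within $\ep$. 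Then $T^N(x_0,y_1)$ is $\ep$-close to $T^N(x_0,y_2')$ for a suitable $y_2'$ near $y_2$ with first coordinates equal, witnessing regional proximality of $((x,y_1),(x,y_2))$; since $\ep$ is arbitrary and $x_0,y_1$ lie in the prescribed neighbourhoods, the pair is in $Q(\T^2,T)$. The main obstacle is organizing the two limits together: I must choose $N$ large enough that the oscillation of $f_n$ exceeds $1$ (so the intermediate value argument applies and covers a full period) while simultaneously keeping the perturbed starting points within the required $\ep$-neighbourhoods; the sign of $l$ splits the argument into two cases exactly as in Lemma \ref{Q3}, and care is needed to ensure the residue $y_2-y_1\pmod 1$ is genuinely attained rather than merely approached, which is where continuity and the mod-$1$ reduction must be combined cleanly.
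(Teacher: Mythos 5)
Your overall strategy is the same as the paper's: the inclusion $Q(\T^2,T)\subset P$ follows from the equicontinuity of the base rotation (the paper verifies this by the direct estimate $d(T^n(x',y'),T^n(x'',y''))\ge\|x'-x''\|$, you invoke Lemma \ref{lm1}; both are fine), and the reverse inclusion is obtained from Lemma \ref{Q20}, the unbounded growth of $f_n(x')-f_n(x'')$, and the intermediate value theorem, split into the cases $l>0$ and $l<0$. However, your execution of the key step contains a genuine error that contradicts your own (correct) earlier observation. You write that for a suitable $N$ there is $x_0$ with $f_N(x_0)\equiv y_2-y_1\pmod 1$ up to $\ep$, and that then $T^N(x_0,y_1)$ is $\ep$-close to $T^N(x_0,y_2')$ for some $y_2'$ near $y_2$. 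But two points on the \emph{same} fibre $\{x_0\}\times\T^1$ have
$$d\bigl(T^N(x_0,y_1),T^N(x_0,y_2')\bigr)=\|y_1-y_2'\|$$
for every $N$, since the second coordinates both receive the same increment $f_N(x_0)$; this is exactly the invariance of the fibre difference that you pointed out at the start of your plan. So this pair of witnesses can never become $\ep$-close unless $y_1$ and $y_2$ already are, and the argument as written fails.

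The fix --- and this is what the paper does --- is to take the two witnesses on \emph{different} fibres, both within $\delta$ of $x$: compare $(x_0,y_1)$ with $(x',y_2)$, where $x',x''$ come from Lemma \ref{Q20}, $f_N(x')-f_N(x'')>1$ for a suitable $N$, and $x_0\in(x'',x')$ is chosen by continuity of $f_N$ so that $\|f_N(x_0)+y_1-f_N(x')-y_2\|<\ep$. Then both witnesses lie in the prescribed neighbourhoods (since $U_1$ and $U_2$ each contain the interval $(x-\delta,x+\delta)$ in the first coordinate), the first coordinates of the images differ by $\|x_0-x'\|<\delta<\ep$, and the second coordinates differ by less than $\ep$ by construction. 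Also note that the normalization $f_N(x_0)\equiv y_2-y_1$ is not the right target even in spirit: what must be controlled is the \emph{difference} $f_N(x_0)-f_N(x')$ relative to $y_2-y_1$, not the value $f_N(x_0)$ itself. A further small simplification compared with your sketch: for regional proximality (unlike for the transitivity argument in Lemma \ref{Q3}) there is no need to control $x+N\alpha$ at all, so the density of $\{n\alpha\}$ plays no role here.
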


\begin{proof}
Firstly, we show that for any $(x_1, y_1), (x_2, y_2)\in \T^2$ with $x_1\neq x_2$,  we have $$((x, y_1),(x, y_2))\notin Q(\T^2, T).$$ 
Fix $x_1,\;x_2, \;y_1\;, y_2\in \T^1$ and let $\epsilon_0={||x_1-x_2||}/{4}$.
Consider  $(x_1-\epsilon_0, x_1+\epsilon_0)\times V_1 \text{ and } (x_2-\epsilon_0, x_2+\epsilon_0)\times V_2, $
 where $V_1, V_2$ are non-empty open neighborhoods of $y_1$ and $y_2$ respectively.
 For  any $(x',y')\in(x_1-\epsilon_0, x_1+\epsilon_0)\times V_1 \text{ and } (x'',y'')\in (x_2-\epsilon_0, x_2+\epsilon_0)\times V_2,$ we have
$$d(T^n(x', y'), T^n(x'', y''))\geq  ||(x'+n\alpha) -(x''+n\alpha)||=||x'- x''|| \geq 2\epsilon_0$$ for each $n\in \mathbb{Z}$.
 So $((x_1, y_1), (x_2, y_2))\notin Q(\T^2, T)$ whenever $ x_1\neq x_2 $.

It remains to  show that  $((x,\; y_1), (x,\; y_2))\in Q(\T^2, T)$ for any $x, \;y_1,\; y_2 \in \T^1$. Fix $x,\; y_1\; y_2 \in \T^1$. 
For any $\epsilon>0$,
suppose $U_1\times V_1$ and $ U_2\times V_2$ are non-empty open neighborhoods of $(x, y_1)$  and
$(x, y_2)$ respectively, then there exists $\delta >0$ ($\delta <\epsilon$) such that
$$(x-\delta, x+\delta)\subset U_1\cap U_2,$$ $$(y_1-\delta,y_1+\delta)\subset V_1 \text{ and } (y_2-\delta,y_2+\delta)\subset V_2.$$

In the following, we divide the proof into two parts.

\noindent\textbf{Case 1}.  
$l>0$. By Lemma \ref{Q20}, there exist $x'\in(x_1+{\delta}/{4}, x_1+{\delta}/{2}),  x''\in(x_1-{\delta}/{2}, x_1-{\delta}/{4})$,
 and $M_1,\; M_2\in\mathbb{R}$ such that 
 $$\inf\limits_{n\geq1}(g_n(x')-n\int_0^1g(x)dx)\geq M_1$$ and $$\sup\limits_{n\geq1}(g_n(x'')-n\int_0^1g(x)dx)\leq M_2.$$ 
 Thus,
\begin{align*}
f_n(x')-f_n(x'')&=g_n(x')-g_n(x'')+nl(x'-x'')\\
&\geq  M_1-M_2+{nl\delta}/{2} \rightarrow +\infty
\end{align*}
as $n\rightarrow +\infty$.
Therefore, there exists $N_1\in \mathbb{N}$ such that $$f_{N_1}(x')-f_{N_1}(x'')>1.$$ By the continuity of $f$, 
there exists $x_0\in (x'', x')\subset (x_1-{\delta}/{2}, x_1+{\delta}/{2})$ 
such that $$||f_{N_1}(x_0)+y_1-f_{N_1}(x')-y_2||<\epsilon,$$ i.e. 
there exist $(x_0,y_1)\in U_1\times V_1$, $(x',y_2)\in U_2\times V_2$ and $N_1\in \mathbb{N}$ such that
  $$d(T^{N_1}(x_0, y_1), T^{N_1}(x', y_2))<\epsilon.$$

 \noindent\textbf{Case 2}. $l<0$. By Lemma \ref{Q20}, there exist  $z'\in(x_1+{\delta}/{4}, x_1+{\delta}/{2}),
z''\in(x_1-{\delta}/{2}, x_1-{\delta}/{4})$ and $K_1, K_2\in \mathbb{R}$ such
that $$\sup\limits_{n\geq1}(g_n(z')-n\int_0^1g(x)dx)\leq K_1$$ and
$$\inf\limits_{n\geq1}(g_n(z'')-n\int_0^1g(x)dx)\geq K_2.$$ 
Thus, 
\begin{align}\label{q5}
f_n(z')-f_n(z'')&=g_n(z')-g_n(z'')+nl(z'-z'')\nonumber\\
&\leq  K_1-K_2+nl\delta \rightarrow -\infty
\end{align}
as $n\rightarrow +\infty$.
 Therefore, there exists $N_2\in \mathbb{N}$ such that $$f_{N_2}(z')-f_{N_2}(z'')<-1.$$ By the continuity of $f$, there exists $z_0\in (z'', z')\subset (x_1-\frac{\delta}{2}, x_1+\frac{\delta}{2})$ such that $$||f_{N_2}(z_0)+y_1-f_{N_2}(z')-y_2||<\epsilon,$$ i.e.  there exist $(z_0,y_1)\in U_1\times V_1$, $(z',y_2)\in U_2\times V_2$ and $N_2 \in\mathbb{N}$ such that
 $$d(T^{N_2}(z_0, y_1), T^{N_2}(z', y_2))<\epsilon.$$
Summarizing up, we finish the proof.
\end{proof}

The following result follows from Lemma \ref{lm1} and Lemma \ref{Q4}.
\begin{Lemma}\label{Q5}
Let $(\T^2,T)$ be a t.d.s. defined in (\ref{defineT}) such that $f\in \mathcal{F}_l$, $l\not=0$ and $ \alpha \in \mathbb{R}\setminus\mathbb{Q}$.
Suppose $$\pi: (\T^2, T)\rightarrow (\T^1, \tau), (x, y)\mapsto x,$$ where $\tau: \T^1\rightarrow \T^1, x\mapsto x+\alpha$.
 Then 
 $(\T^1, \tau)$ is the maximal equicontinuous factor of $(\T^2, T)$.
 \end{Lemma}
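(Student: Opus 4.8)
The plan is to combine the explicit computation of the regionally proximal relation carried out in Lemma \ref{Q4} with the abstract characterization of the maximal equicontinuous factor recorded in the remark following Lemma \ref{lm1}. The key observation is that the relation determined in Lemma \ref{Q4} coincides exactly with the fibre relation $R_\pi$ of the projection $\pi$, so that no further saturation will be necessary.

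First I would record that $\pi\colon (\T^2, T)\to(\T^1,\tau)$ is a genuine factor map: it is continuous and onto, and $\pi\circ T=\tau\circ\pi$, since $T(x,y)=(x+\alpha, f(x)+y)$ projects to $x+\alpha=\tau(x)$. Moreover $(\T^1,\tau)$ is a rotation on the compact group $\T^1$ and is therefore equicontinuous. Next I would identify $Q(\T^2,T)$ with $R_\pi$. By definition
$$R_\pi=\{((x_1,y_1),(x_2,y_2))\in\T^2\times\T^2:\pi(x_1,y_1)=\pi(x_2,y_2)\}=\{((x,y_1),(x,y_2)):x,y_1,y_2\in\T^1\},$$
which is precisely the set computed in Lemma \ref{Q4}. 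Hence $Q(\T^2,T)=R_\pi$. I would also note that $R_\pi$ is a closed $(T\times T)$-invariant equivalence relation: it is an equivalence relation by construction, it is closed because $\pi$ is continuous, and it is invariant because $\pi\circ T=\tau\circ\pi$.

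Finally I would invoke the remark after Lemma \ref{lm1}, which states that the maximal equicontinuous factor of $(\T^2,T)$ is induced by the smallest closed invariant equivalence relation containing $Q(\T^2,T)$. Since $Q(\T^2,T)=R_\pi$ is \emph{itself} already a closed invariant equivalence relation, the smallest such relation containing it is $R_\pi$. The quotient of $(\T^2,T)$ by $R_\pi$ is conjugate, via $\pi$, to $(\T^1,\tau)$, so $(\T^1,\tau)$ is the maximal equicontinuous factor.

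The point I would emphasize is that there is essentially no hard step remaining here: the entire difficulty of pinning down the equicontinuous structure was already overcome in Lemma \ref{Q4}, where both the inclusion $((x_1,y_1),(x_2,y_2))\notin Q$ for $x_1\neq x_2$ and the inclusion $((x,y_1),(x,y_2))\in Q$ were established. The present lemma is a formal consequence; the only things to check are that the computed relation is exactly $R_\pi$ and that $R_\pi$ is already closed, invariant and an equivalence relation, so that the smallest closed invariant equivalence relation containing $Q(\T^2,T)$ needs no enlargement and the factor is literally $(\T^1,\tau)$.
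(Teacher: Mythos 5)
Your argument is correct and is exactly the route the paper takes: the paper simply states that Lemma \ref{Q5} follows from Lemma \ref{lm1} (and the remark after it) together with Lemma \ref{Q4}, and your write-up supplies precisely the details of that deduction, observing that $Q(\T^2,T)=R_\pi$ is already a closed invariant equivalence relation so no saturation is needed.
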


Now we prove Theorem B.
\begin{proof}[Proof of Theorem B]
By Lemma \ref{Q3}, $(\T^2, T)$ is minimal. By Lemma \ref{Q5}, we know that 
$(\T^1, \tau)$ is 
the maximal equicontinuous factor of $(\T^2, T)$ where $\tau: \T^1 \rightarrow \T^1, x\mapsto x+\alpha$.
It is clear that $(\T^2, T)$ is an isometric extension of $(\T^1, \tau)$.  We can easily get Theorem B by applying Theorem \ref{lm4} and Theorem \ref{lm2}.
\end{proof}

\section{An example}
In this section, we will give a negative answer to the latter part of the open question raised by Host-Kra-Maass mentioned
in the introduction. That is, we will construct a system whose topological complexity is low
but it is not a system of order 2. Precisely, we will find a bounded variation function $f$
which belongs to $\mathcal{F}_l$ with $l\neq 0$, and at the same time, we define $(\T^2, T)$ in (\ref{defineT})
such that $f$ also satisfies that for any $\varphi \in \mathcal{F}_0$
and $c \in \mathbb{R}$, the equation  $$f(x)=\varphi(x+\alpha)-\varphi (x)+lx+c$$ does not hold.
To do this we start with continued fractions and some related results.

\subsection{Continued fractions}
 A {\it (simple) continued fraction} is a formal expression of the form
 $$a_0+\frac{1}{a_1+\dfrac{1}{a_2+\dfrac{1}{a_3+\dfrac{1}{a_4+_{\ddots}}}}}$$
 which we will also denote by
 $$[a_0; a_1, a_2, ,a_3, \cdots]$$
 with $a_n\in \mathbb{N}$ for $n\geq 1$ and $a_0\in \mathbb{N}_0:=\{0\}\bigcup \mathbb{N}$. The numbers $a_n$ are {\it the partial quotients }of the continued fraction. We also write
$$[a_0; a_1, a_2, \cdots, a_n]$$
for the finite fraction
$$a_0+\frac{1}{a_1+\dfrac{1}{a_2+_{ \ddots+\dfrac{1}{a_{n-1}+\dfrac{1}{a_n}}}}}.$$

We state some basic properties about continued fractions for convenience (See for example \cite{METW} for details):

\begin{enumerate}[$(1)$]
\item The infinite continued fraction converges to a real number, namely, there exists a real number $\alpha$ such that
$$\alpha=[a_0; a_1, a_2, \cdots ]=\lim_{n\rightarrow \infty}[a_0; a_1, a_2, \cdots , a_n].$$
We say that $[a_0; a_1, a_2, \cdots ]$ is  the continued fraction expansion for $\alpha$.

\item Let $a_n\in \mathbb{N}$ for all $n\geq 0$. Then $[a_0; a_1, a_2, \cdots ]$ is irrational.

\item The map that sends the sequence
$$(a_0, a_1, a_2, \cdots )\in \mathbb{N}_0\times \mathbb{N}^\mathbb{N}$$
to $[a_0; a_1, a_2, \cdots ]$ is injective.

\item For any irrational number $\alpha\in (0, 1)$, there exists a continued fraction expansion for $\alpha$.
\end{enumerate}

A real number $\alpha=[a_0; a_1, a_2, \cdots ]\in (0, 1)$ is called{ \it badly approximable }if there is
some $M$ such that $a_n\leq M$ for all $n\geq 1$.
The following result is well known (See for example \cite[Page 87]{METW}).
\begin{Lemma}
A real number $\alpha\in (0, 1)$  is badly approximable if and only if  there exists some constant $c=c(\alpha)>0$ such that
$$ |\alpha- \frac{p}{q}|> \frac{c}{q^2}$$
for every rational number $\frac{p}{q}$.

\end{Lemma}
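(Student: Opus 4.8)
The plan is to prove both implications through the convergents $p_n/q_n=[a_0;a_1,\ldots,a_n]$ of the continued fraction expansion of $\alpha$, using two standard facts recalled from \cite{METW}: the identity
$$\left|\alpha-\frac{p_n}{q_n}\right|=\frac{1}{q_n(\alpha_{n+1}q_n+q_{n-1})},$$
where $\alpha_{n+1}=[a_{n+1};a_{n+2},\ldots]$ is the complete quotient and satisfies $a_{n+1}<\alpha_{n+1}<a_{n+1}+1$, together with the denominator recurrence $q_{n+1}=a_{n+1}q_n+q_{n-1}$ and the monotonicity $q_{n-1}<q_n$.

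First I would prove the forward direction. Assuming $\alpha$ is badly approximable, we have $a_n\leq M$ for all $n\geq 1$, hence $\alpha_{n+1}<M+1$ and $q_{n-1}<q_n$, so the identity above gives
$$\left|\alpha-\frac{p_n}{q_n}\right|>\frac{1}{(M+2)q_n^{2}}$$
for every convergent. The point that requires care is that this clean bound only covers convergents, so I must still handle rationals $p/q$ that are \emph{not} convergents. For these I would invoke the classical best-approximation (Legendre) theorem: if $|\alpha-p/q|<1/(2q^{2})$ then $p/q$ is a convergent of $\alpha$. Its contrapositive shows that any non-convergent $p/q$ satisfies $|\alpha-p/q|\geq 1/(2q^{2})$. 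Setting $c=1/(M+2)\leq 1/2$, the inequality $|\alpha-p/q|>c/q^{2}$ then holds in both cases, which establishes one direction.

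For the converse I would argue by contraposition. Suppose $\alpha$ is not badly approximable, so the partial quotients $(a_n)$ are unbounded. Combining the upper estimate $|\alpha-p_n/q_n|<1/(q_nq_{n+1})$ with $q_{n+1}\geq a_{n+1}q_n$ yields
$$\left|\alpha-\frac{p_n}{q_n}\right|<\frac{1}{a_{n+1}q_n^{2}}.$$
Given any $c>0$, unboundedness lets me choose $n$ with $a_{n+1}>1/c$, so the convergent $p_n/q_n$ violates $|\alpha-p/q|>c/q^{2}$. Since no $c>0$ can survive, the separation property fails, completing the contrapositive.

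I expect the main obstacle to be exactly the reduction of arbitrary rationals to convergents in the forward direction: the elementary convergent estimates do not by themselves cover denominators that are not of the form $q_n$, so the argument genuinely relies on Legendre's theorem (or an equivalent comparison of $p/q$ with the convergent of comparable denominator). The remaining steps are routine manipulations of the standard convergent identities and recurrences.
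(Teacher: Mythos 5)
Your proof is correct. Note that the paper does not actually prove this lemma at all: it is stated as well known with a citation to \cite[Page 87]{METW}, and the argument you give is precisely the standard one found there --- the lower bound $\left|\alpha-\frac{p_n}{q_n}\right|=\frac{1}{q_n(\alpha_{n+1}q_n+q_{n-1})}>\frac{1}{(M+2)q_n^2}$ for convergents when the partial quotients are bounded by $M$, Legendre's theorem to dispose of non-convergents, and the estimate $\left|\alpha-\frac{p_n}{q_n}\right|<\frac{1}{a_{n+1}q_n^2}$ to kill any candidate constant $c$ when the $a_n$ are unbounded. You correctly identify the only genuinely delicate point (reducing arbitrary rationals to convergents), and the constant bookkeeping works out since $M\geq 1$ forces $c=\frac{1}{M+2}\leq\frac{1}{3}<\frac{1}{2}$, so the Legendre bound $\frac{1}{2q^2}$ strictly dominates $\frac{c}{q^2}$. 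The one cosmetic omission is the reduction of a rational $\frac{p}{q}$ to lowest terms $\frac{p'}{q'}$ before applying Legendre; since $q'\leq q$ this only strengthens the desired inequality, so nothing is lost.
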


We define $v(\alpha)=\liminf\limits_{n\rightarrow +\infty } n\| n\alpha\|$. It is clear that $v(\alpha)>0$ if and only if $\alpha$ is badly approximable. It is well known that the set of all badly approximable numbers in $(0, 1)$ is a null set with respect to Lebesgue measure (see for example \cite[Page 87]{METW}). Hence the Lebesgue measure of the set $\{\alpha \in (0, 1): v(\alpha)=0\} $ is one.

\medskip

Now we prove Theorem C.
\begin{proof}[Proof of Theorem C]
 By the definition of $v(2\pi\alpha)$, we know that $\liminf\limits_{n\rightarrow +\infty}n |e^{2\pi in \alpha}-1| =0$.
So there exists an increasing  sequence $\{n_k\}_{k=1}^{+\infty}$ of positive integers such that 
$$n_k| e^{2\pi in_k \alpha}-1|<{1}/{k^2}$$ for every $k\in \mathbb{N}$. Take a function
$$f(x)=lx+\sum_{n=-\infty}^{+\infty}a_ne^{2\pi inx},$$
where
\begin{equation}
a_n=
\begin{cases}
 e^{2\pi i n_k \alpha}-1& \text{if $n=\pm n_k, k\in \mathbb{N}$},\nonumber\\
0& \text{else}.
\end{cases}
\end{equation}
Since $f(x)=\overline{f}(x)$, $f$ is a real valued function.
Since $$f'(x)=l+\sum_{n=-\infty}^{+\infty}2 \pi in a_ne^{2\pi i n x}$$ and
$$|f'(x)|\leq 4\pi \sum_{k=1}^{+\infty} {1}/{k^2}+|l|,$$ we know that $f$ is a continuous function with
 a bounded variation. By Theorem A, we know that (\ref{complexity}) holds.
 
 By the construction of $f$ and Lemma \ref{Q3}, we know that $(\T^2, T)$ is minimal. It is clear that $(\T^2, T)$ is distal.

Next we show that for the function $f$ defined above, the system $(\T^2, T)$ is not a system of order 2. Suppose $(\T^2, T)$ is a system of order 2, by Theorem B, we can assume that there exists  $\varphi\in \mathcal{F}_0$  and $c\in \mathbb{R}$ such that
 \begin{equation}
 f(x)=\varphi(x+\alpha)-\varphi(x)+lx+c\nonumber
 \end{equation}
 for any $x\in \mathbb{R}$.  

Let  $\varphi(x)=\sum\limits_{n=-\infty}^{+\infty} b_n e^{2\pi inx}$ be  the Fourier series of periodic function $\varphi$.
 Comparing the Fourier coefficients of the equation $f(x)-lx=\varphi(x+\alpha)-\varphi(x)+c$, we have
 \begin{equation}
a_n=
\begin{cases}
b_n(e^{2\pi i n\alpha}-1), & \text{ $n \neq 0$},\nonumber\\
c,        & \text{ $n=0$ }.
\end{cases}
\end{equation}
This implies that $\sum\limits_{n=-\infty}^{+\infty}| b_n| ^2=+\infty$, a contradiction with $\sum\limits_{n=-\infty}^{+\infty}| b_n| ^2=\int_0^1 |\varphi(x)|^2 dx<+\infty$. Thus, by Theorem B, we
conclude that $(\T^2, T)$ is not a system of order 2.
\end{proof}

\begin{Remark}
Let $m$ be the  Lebesgue measure on $\mathbb{R}$, $$A=\{\alpha \in \mathbb{R}\setminus\mathbb{Q}: v(\alpha)=0\}$$ and 
$$B=\{\alpha \in \mathbb{R}\setminus\mathbb{Q}: v(2\pi\alpha)=0\}.$$
 Since $m(\{\alpha \in(0, 1): v(\alpha)=0\})=1$, we have $$m(A\cap (0, 2\pi))=2\pi,$$ which implies that $$m(\frac{A}{2\pi}\cap (0, 1))=1.$$ 
 That is, $m(B\cap (0, 1))=1$.
 Therefore for almost all $\alpha \in (0, 1)$ in the sense of Lebesgue measure, there exists $f\in \mathcal{F}_l$ such that Theorem C holds for  the system $(\T^2, T)$.
\end{Remark}

\end{document}